\newtheorem{theorem}{Theorem}
\newtheorem{lemma}[theorem]{Lemma}
\newtheorem{proposition}[theorem]{Proposition}
\newtheorem{corollary}[theorem]{Corollary}
\newtheorem{remark}[theorem]{Remark}
\newtheorem{defn}[theorem]{Definition}
\newcommand{\Rr}{\mathbb{R}}
\newcommand\minus{%
  \setbox0=\hbox{-}%
  \vcenter{%
    \hrule width\wd0 height \the\fontdimen8\textfont3%
  }%
}
\begin{document}

\title[$\lambda$-stability]
{$\lambda$-stability of periodic billiard orbits}

\author[Gaiv\~ao]{Jos\'e Pedro Gaiv\~ao}
\address{Departamento de Matem\'atica and CEMAPRE, ISEG\\
Universidade de Lisboa\\
Rua do Quelhas 6, 1200-781 Lisboa, Portugal}
\email{jpgaivao@iseg.ulisboa.pt}

\author[Troubetzkoy]{Serge Troubetzkoy}
\address{Aix Marseille Universit\'e, CNRS\\ Centrale Marseille, I2M, UMR 7373}
\email{serge.troubetzkoy@univ-amu.fr}

\date{\today}  

\begin{abstract}
We introduce a new notion of stability for periodic orbits in polygonal billiards. We say that a periodic orbit of a polygonal billiard is $\lambda$-stable if there is a periodic orbit for the corresponding pinball billiard which converges to it as $\lambda\to1$. This notion of stability is unrelated to the notion introduced by Galperin, Stepin and Vorobets. We give sufficient and necessary conditions for a periodic orbit to be $\lambda$-stable and prove that the set of $d$-gons having at most finite number of $\lambda$-stable periodic orbits is dense is the space of $d$-gons. Moreover, we also determine completely the $\lambda$-stable periodic orbits in integrable polygons. 
\end{abstract}
 
\maketitle
\tableofcontents

\section{Introduction}
The billiard in a polygon $P$ is the flow on the unit tangent bundle of $P$ corresponding to the free motion of a point-particle with specular reflection at $\partial P$, i.e., the angle of reflection equals the angle of incidence.  The first return map of the billiard flow to the boundary is called the billiard map. Polygonal billiards are known to have zero topological entropy, thus zero Lyapunov exponents \cite{GKT,G-H,Katok}. 

Recently, a new class of billiards with non-standard reflection laws, known as pinball billiards, has been introduced in which the reflection law contracts the angle of reflection by a factor $\lambda\in(0,1)$ towards the normal of the table \cite{M-P-S}. In this case, the billiard map no longer preserves the Liouville measure and the system may have hyperbolic attractors \cite{DelMagno-et-al}. The inverse of the pinball billiard map with contraction $\lambda$ is conjugated to a billiard map which expands the angle of reflection away from the normal of table by a factor $\lambda^{-1}$. This means that the hyperbolic attractors for the billiard which contracts angles by $\lambda$ become hyperbolic repellers for the billiard which expands angles by $\lambda^{-1}$. For this reason we also call the billiard expanding angles by $\lambda^{-1}$ a pinball billiard.

Perpendicular orbits bouncing between parallel sides are parabolic and are exactly the periodic orbits of period two for the billiard map. If the billiard in $P$ has no such orbits, then for any $\lambda\in(0,1)$, the pinball billiard in $P$ has a finite number of ergodic Sinai-Ruelle-Bowen (SRB) measures, each supporting a uniformly hyperbolic attractor. The union of the basins of attraction of the SRB measures has full Lebesgue measure \cite{DelMagno-et-al-B}, and by Pesin theory, the periodic orbits are dense in the support of the SRB measures. 

Therefore, the billiard of any polygon can be embedded in a one-parameter family, parametrized by $\lambda\in(0,+\infty)$, of billiards having periodic orbits whenever $\lambda\neq1$. In the conservative limit $\lambda\to1$ we obtain the billiard with the standard reflection law for which the existence of periodic orbits is an open problem. Hence, it would be interesting to characterize the periodic orbits of the billiard in $P$ that can be approximated by periodic orbits of the corresponding pinball billiard.

In this paper we introduce a new notion of stability for periodic orbits. We say that a periodic orbit $q$ of the billiard in $P$ is \textit{$\lambda$-stable} if there exist a sequence $\lambda_n\to 1$ of positive real numbers different from 1 and periodic orbits $q_{\lambda_n}$ for the pinball billiard in $P$ hitting the same sides of $P$ such that $q_{\lambda_n}$ converges to $q$ as $n\to \infty$. In fact, we define two notions of stability, which we call $\lambda^{\pm}$-stable, depending if the sequence $\lambda_n$ converges to $1$ from above or below. We study these notions of stability and show that they capture the same periodic orbits, i.e., $\lambda^-$-stability and $\lambda^+$-stability coincide. We also show that $\lambda$-stability is unrelated to the notion of stability for periodic orbits introduced by Galperin, Stepin and Vorobets \cite{GSV}. Moreover, we derive sufficient and necessary conditions for a periodic orbit to be $\lambda$-stable, and using these conditions 
we prove that the set of $d$-gons having at most finite number of $\lambda$-stable periodic orbits is dense is the space of $d$-gons. Finally, 
we determine the $\lambda$-stable periodic orbits in integrable polygons, i.e., the rectangles, equilateral triangle and right triangles 45-45-90 and 30-60-90 degrees.

The rest of the paper is structured as follows. In section~\ref{section preliminaries} we introduce the pinball billiard and prove some preliminary results
on $\lambda$-stability. In section~\ref{nec and suf section} we derive sufficient and necessary conditions for a periodic orbit to be $\lambda$-stable and in section~\ref{sec:results} we collect some applications to rational and integrable polygons. Finally, section~\ref{sec:proofs} contains the proofs of the results stated in section~\ref{sec:results}. 

\section{Preliminaries}\label{section preliminaries}

\subsection{Polygonal billiards}
For simplicity we assume that $P$ is a simply connected $d$-gon, but our results hold for finitely connected polygons as well. 
 Give $P$ the anti-clockwise orientation. 
Let $M$ be the set of unit tangent vectors $(x,v)$ of $P$ such that $x\in\partial P$ and $v$ is pointing inside $P$. A vector $(x,v)$ defines an oriented ray and we denote by $(\bar{x},\bar{v})$ the unit tangent vector where $\bar{x}$ is the first intersection of the ray with $\partial P$ and $\bar{v}$ is the vector pointing inside $P$ obtained by reflecting $v$ along the boundary of $P$. 
The part of the ray connecting $x$ to $\bar{x}$ is called a \emph{link} of the trajectory and will be noted
$((x,v),\bar{x})$.
The map $\Phi:M\to M$ just defined is called the \textit{billiard map}. Notice that the billiard map is not continuous for the vectors $(x,v)$ whose foot point image $\bar{x}$ is a vertex of $P$. So we exclude those vectors from $M$ as well as the vectors of $M$ sitting at vertices of $P$. We denote this subset of $M$ by $M'$. Labeling the sides of $P$ from $1$ to $d$, we see that the domain $M'$ of the billiard map is a finite union of connected components $\Sigma_{i,j}$ which correspond to billiard segments that connect side $i$ to side $j$. 

Consider a billiard trajectory starting at $x$ in the direction $v$ and its first link $((x,v),\bar{x})$.
To obtain the direction of the second link we reflect this link in the boundary of $P$. Instead 
we reflect $P$ about the side of $P$ containing $\bar{x}$, the reflection of the second link 
becomes the straightened continuation of the first link.  The iteration of this procedure is called
\emph{unfolding} or \emph{straightening} of the billiard trajectory.

 A \emph{generalized diagonal} is a billiard trajectory that connects two corners of $P$, more formally
 it is a finite orbit $(\Phi^i(x,v): i = 0, \dots, n)$ such that $x$ is a corner of $P$ and the image of $\Phi^n(x,v)$
is not defined since the  first intersection of the oriented ray in this direction with $\partial P$ is a corner
of $P$ (possibly the same corner).

If we parametrize $\partial P$ by the arc-length parameter $s\in S^1$ and denoting by $x(s)\in\partial P$ the corresponding point in $\partial P$, then we can give coordinates $(s,\theta)$ to a point in $(x(s),v)\in M'$ where $\theta$ is the oriented angle between $v$ and the inward normal at $x(s)$. The angle $\theta$ belongs to the interval $\left(-\pi/2,\pi/2\right)$. Denote by $(\bar{s},\bar{\theta})$ the image of $(s,\theta)$ under the billiard map. Let $\beta_{i,j}$ be equal to $\pi$ minus the angle formed by the oriented sides $i$ and $j$ of $P$. Then the angle $\bar{\theta}$ is given by,
\begin{equation}\label{angle map}
\bar{\theta}=\beta_{i,j}-\theta,
\end{equation}
and the derivative of the billiard map is, 
\begin{equation}\label{derivative}
	\begin{split}
		 D \Phi(s,\theta)
		 = - \begin{pmatrix}
				\dfrac{\cos \theta}{\cos \bar{\theta}} & \dfrac{t(s,\theta)}{\cos \bar{\theta}} \\[1em]
				0 & 1 \\
			\end{pmatrix},
	\end{split}
\end{equation}
where $t(s,\theta)$ denotes the length of the segment joining $x(s)$ and $x(\bar{s})$. For the computation of the derivative, see e.g. \cite[Formula~(2.26)]{Chernov-Markarian}.
%
%

 Periodic orbits for the billiard map $\Phi$ come in one-parametric families. Indeed, any periodic trajectory of period $p$ can be included in a \emph{cylinder} of periodic trajectories having the same period when $p$ is even or twice the period when $p$ is odd. Moreover, every periodic cylinder contains a generalized diagonal on its boundary.
We will see below that for pinball billiards the situation is completely different. 

\subsection{Pinball billiard map}\label{pinball section}

Given $\lambda>0$, let $R_\lambda:M\to M$ be the map $(s,\theta)\mapsto (s,\lambda\theta)$. Define
$$
\Phi_{\lambda}:=R_\lambda\circ\Phi.
$$ 
If $\lambda\neq 1$, then we call $\Phi_{\lambda}$ the \textit{pinball billiard map}. When $\lambda=1$, we recover the billiard map with the standard (elastic) reflection law. Let $S$ denote the involution map $(s,\theta)\mapsto(s,-\theta)$. Using $\Phi^{-1}\circ S=S\circ\Phi$ we conclude that
\begin{equation}\label{conjugacy}
\Phi^{-1}_{\lambda}\circ S=S\circ \Phi_{\lambda^{-1}},
\end{equation}
i.e., the billiard map $\Phi_\lambda$ is conjugated to $\Phi_{\lambda^{-1}}^{-1}$ by the involution $S$.

If $(s,\theta)\in\Sigma_{i,j}$ and $(s',\theta')=\Phi_\lambda(s,\theta)$, then we have by \eqref{angle map} and \eqref{derivative} that
\begin{equation}\label{pinball map}
s'=-A_{i,j}(\theta)s+B_{i,j}(\theta)\quad\text{and}\quad\theta'=\lambda(\beta_{i,j}-\theta),
\end{equation}
where 
$$
A_{i,j}(\theta):=\frac{\cos\theta}{\cos(\beta_{i,j}-\theta)},
$$
and $B_{i,j}$ is an analytic function, in fact rational trigonometric, depending only on the sides $i$ and $j$ of $P$. It is not difficult to derive an explicit expression for $B_{i,j}$, but in the following we shall not use it.

An orbit of the pinball billiard map $\Phi_\lambda$ is a sequence $\{(s_n,\theta_n)\}_{n\geq0}$ in $M'$ such that $(s_{n+1},\theta_{n+1})=\Phi_\lambda(s_n,\theta_n)$. Given an orbit $\{(s_n,\theta_n)\in M'\colon n\geq0\}$, we denote by $i_n\in\{1,\ldots,d\}$ the corresponding \textit{itinerary}, i.e., $(s_n,\theta_n)\in\Sigma_{i_{n},i_{n+1}}$ for every $n\geq 1$.

\subsection{Periodic orbits and $\lambda$-stability}

\begin{proposition}\label{prop-isolated}
When $\lambda\neq 1$, all periodic orbits for $\Phi_{\lambda}$ having period $p>2$ are hyperbolic, isolated and their total number does not exceed $d^p$.
\end{proposition}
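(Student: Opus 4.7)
The plan is to compute the Jacobian $D\Phi_\lambda^p$ at a period-$p$ point explicitly, show both its eigenvalues lie off the unit circle (giving hyperbolicity), deduce isolation from the absence of the eigenvalue $1$, and bound the number of orbits by counting itineraries. Because $\Phi_\lambda = R_\lambda\circ \Phi$ with $DR_\lambda=\mathrm{diag}(1,\lambda)$, combining with \eqref{derivative} yields
\begin{equation*}
D\Phi_\lambda(s,\theta)=-\begin{pmatrix} \cos\theta/\cos\bar\theta & t(s,\theta)/\cos\bar\theta \\ 0 & \lambda \end{pmatrix},
\end{equation*}
which is upper triangular. Along any period-$p$ orbit $\{(s_n,\theta_n)\}$ the product $D\Phi_\lambda^p$ is also upper triangular, and its eigenvalues are the diagonal entries
\begin{equation*}
e_1=(-1)^p\prod_{n=0}^{p-1}\frac{\cos\theta_n}{\cos\bar\theta_n}\qquad\text{and}\qquad e_2=(-\lambda)^p.
\end{equation*}

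The eigenvalue $e_2$ has modulus $\lambda^p\neq 1$ since $\lambda\neq 1$. For $e_1$, I would invoke the pinball reflection rule in \eqref{pinball map} which gives $\bar\theta_n=\theta_{n+1}/\lambda$; reindexing using $\theta_p=\theta_0$ turns the product into $\prod_{n=0}^{p-1}\cos\theta_n/\cos(\theta_n/\lambda)$. Since $\theta_n=\lambda\bar\theta_{n-1}$ with $\bar\theta_{n-1}\in(-\pi/2,\pi/2)$, both $|\theta_n|$ and $|\theta_n/\lambda|$ stay in $[0,\pi/2)$ and all the cosines are positive. When $\lambda<1$ one has $|\theta_n/\lambda|>|\theta_n|$, and by monotonicity of $\cos$ on $[0,\pi/2)$ each factor is $\geq 1$, strictly $>1$ whenever $\theta_n\neq 0$; the inequality reverses for $\lambda>1$. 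The one small point I expect to be the main obstacle is verifying that, under the hypothesis $p>2$, not all $\theta_n$ can vanish: if they did, \eqref{pinball map} would force $\beta_{i_n,i_{n+1}}=0$ for every $n$, so consecutive sides are parallel and the trajectory is a period-$2$ perpendicular bounce, contradicting $p>2$. Thus $|e_1|\neq 1$ and the orbit is hyperbolic.

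Isolation is then immediate: since neither $e_1$ nor $e_2$ equals $1$, the linearization $D\Phi_\lambda^p-I$ is invertible at the periodic point, so by the implicit function theorem the fixed point of $\Phi_\lambda^p$ is isolated. For the cardinality bound I would argue by itineraries: every period-$p$ point has a symbol sequence $(i_0,\ldots,i_{p-1})\in\{1,\ldots,d\}^p$, and conversely, fixing such a sequence, \eqref{pinball map} shows that $\theta_0\mapsto\theta_p$ is affine with slope $(-\lambda)^p\neq 1$, giving a unique $\theta_0$; with all angles determined, $s_0\mapsto s_p$ is also affine with slope $(-1)^p\prod_n A_{i_n,i_{n+1}}(\theta_n)$, which by the computation above is not equal to $1$, giving a unique $s_0$. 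Hence each itinerary supports at most one period-$p$ point, so the total number of periodic points of period $p$, and a fortiori of periodic orbits, is bounded by $d^p$.
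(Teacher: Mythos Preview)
Your proof is correct. The paper's own proof is much terser: it reduces to $\lambda<1$ via the conjugacy \eqref{conjugacy}, cites \cite{DelMagno-et-al} as a black box for hyperbolicity, and defers the itinerary count to the proof of Proposition~\ref{prop-unique}. Your argument is more self-contained: you compute the triangular Jacobian and read off the eigenvalues directly, which is exactly the mechanism the paper itself exploits later in Proposition~\ref{prop-odd} and Lemma~\ref{F} (the function $\rho_\lambda(\theta)=\cos(\lambda\theta)/\cos\theta$ there is the reciprocal of your factor $\cos\theta_n/\cos(\theta_n/\lambda)$). Your uniqueness-per-itinerary argument is identical to what appears in the proof of Proposition~\ref{prop-unique}. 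The one place to be slightly careful is the last step of the counting argument: the claim that the $s$-slope is $\neq 1$ uses that not all $\theta_n$ vanish, which you established for genuine period-$p$ orbits but which can fail for itineraries of the form $(i_0,i_1,i_0,i_1,\dots)$ between parallel sides; those itineraries, however, only support orbits of minimal period $2$, so they contribute nothing to the count of period-$p$ orbits and the bound $d^p$ stands.
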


\begin{proof}
By \eqref{conjugacy} it is sufficient to prove the statement assuming $\lambda<1$. Being hyperbolic follows from \cite{DelMagno-et-al}, which implies that are isolated. The number of periodic orbits follows from the fact that for any $p>2$ there are at most $d^p$ periodic itineraries of length $p$ (see the proof of Proposition~\ref{prop-unique}).
\end{proof}

%
%
%

\begin{defn}

We say that a periodic orbit $q$ of $\Phi$ is \textit{$\lambda^{\pm}$-stable} if there exist a strictly decreasing (increasing)  sequence $\{\lambda_n\}_{n\geq1}$ converging to 1 and for each $n\in\mathbb{N}$ a periodic orbit $q_n$ of $\Phi_{\lambda_n}$ such that $q$ and $q_n$ have the same itinerary and $q_n\to q$ as $n\to\infty$. We say that $q$ is \textit{$\lambda$-stable} if it is both $\lambda^+$-stable and $\lambda^-$-stable.

We say that a periodic cylinder of $\Phi$ is  \textit{$\lambda^{\pm}$-stable} if there exists a $\lambda^{\pm}$-stable periodic orbit in the cylinder, and \textit{$\lambda$-stable} if it is both $\lambda^+$-stable and $\lambda^-$-stable.

\end{defn}

We call a periodic orbit a \textit{ping-pong} orbit if it bounces between two parallel sides.
Clearly each ping-pong orbit is $\lambda$-stable, and thus the whole associated cylinder is
$\lambda$-stable. For all other periodic cylinders we have a different behavior. 

\begin{proposition}\label{prop-unique}
If a periodic cylinder is $\lambda^{\pm}$-stable and not a ping-pong cylinder
 then the associated $\lambda^\pm$-stable periodic orbit is unique.
\end{proposition}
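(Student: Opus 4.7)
The key observation is that $\Phi_\lambda$ has a triangular structure in the coordinates $(s,\theta)$: by \eqref{pinball map} the angle $\theta_{n+1}=\lambda(\beta_{i_n,i_{n+1}}-\theta_n)$ evolves independently of $s$. I will exploit this along the periodic itinerary of the cylinder to show that, for every $\lambda\neq 1$ near $1$, the itinerary supports at most one closed pinball orbit, and that this orbit extends continuously to $\lambda=1$, which forces the $\lambda^\pm$-stable orbit in the cylinder to be unique.

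First I would fix the itinerary $(i_0,\ldots,i_{p-1})$ of the cylinder (doubled when the intrinsic period is odd, so that $p$ is even and $p>2$ by the non-ping-pong hypothesis). Composing the $p$ affine maps $\theta\mapsto\lambda(\beta_{i_n,i_{n+1}}-\theta)$ for $n=0,\ldots,p-1$ produces an affine return map $\theta_0\mapsto(-\lambda)^p\theta_0+c(\lambda)$; since $\lambda>0$ with $\lambda\neq 1$ and $p$ even, $(-\lambda)^p\neq 1$, so $\theta_p=\theta_0$ has a unique solution $\theta_0(\lambda)$ and the full sequence $(\theta_n(\lambda))_{n=0}^{p-1}$ is determined. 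With the angles frozen, the $s$-recursion in \eqref{pinball map} becomes an affine system whose $p$-fold composition has the form $s_0\mapsto C(\lambda)s_0+D(\lambda)$, with $C(\lambda)=(-1)^p\prod_{n=0}^{p-1}A_{i_n,i_{n+1}}(\theta_n(\lambda))$. If $C(\lambda)\neq 1$ the fixed point is unique; if $C(\lambda)=1$ the fixed-point set is empty or an entire affine line. The line alternative is ruled out by Proposition~\ref{prop-isolated}, since $p>2$ and $\lambda\neq 1$ force pinball periodic orbits to be isolated. Hence for every $\lambda$ in a punctured one-sided neighborhood of $1$ there is at most one periodic orbit $Q(\lambda)$ of $\Phi_\lambda$ realizing the itinerary.

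Finally, both $\theta_0(\lambda)$ and $s_0(\lambda)=D(\lambda)/(1-C(\lambda))$ are rational in $\lambda$. The existence of a one-parameter family of periodic orbits at $\lambda=1$ forces $c(1)=0$ and $D(1)=0$, while the denominators $1-(-\lambda)^p$ and $1-C(\lambda)$ have simple zeros at $\lambda=1$; a L'Hospital-type computation then shows that $Q(\lambda)$ admits one-sided limits $Q(1^\pm)$, which by continuity of $\Phi_\lambda$ must be $\Phi$-periodic points in the cylinder. Any $\lambda^\pm$-stable orbit in the cylinder is, by definition, a cluster point of $\{Q(\lambda_n)\}$ for some sequence $\lambda_n\to 1^\pm$, so by this continuous extension it must coincide with $Q(1^\pm)$, proving uniqueness. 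The step I expect to be most delicate is the reduction to minimal period in the odd-intrinsic-period case: there the length-$p$ itinerary also satisfies the doubled length-$2p$ fixed-point equations, and one must verify that both systems pick out the same unique solution so that the above argument applies without modification.
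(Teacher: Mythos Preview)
Your approach is essentially the same as the paper's: both exploit the triangular structure of \eqref{pinball map} to argue that along the cylinder's itinerary the angle return map $\theta\mapsto(-\lambda)^p\theta+c(\lambda)$ and then the position return map $s\mapsto C(\lambda)s+D(\lambda)$ each pin down a unique fixed point for $\lambda\neq 1$, so the pinball periodic orbit with that itinerary is unique. Your treatment is in fact more careful than the paper's terse version---you handle the degenerate case $C(\lambda)=1$ via Proposition~\ref{prop-isolated} and spell out the continuous extension to $\lambda=1$ that makes the limiting orbit unique---while your flagged concern about odd intrinsic period is unnecessary, since working with the cylinder's even period $p$ (as the paper does) already covers that case uniformly.
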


\begin{proof}
This follows from the fact that for any given periodic itinerary there is at most one periodic orbit realizing that itinerary. Indeed, let $n>2$ denote the period of the cylinder. By \eqref{pinball map}, the second component of $\Phi^n_\lambda$ is a composition of affine maps of the form $\theta\mapsto \lambda(\beta_{i,j}-\theta)$. Thus, the angle $\theta_\lambda$ of the associated $\lambda^\pm$-stable periodic orbit is uniquely determined by its itinerary. Similarly, the first component of $\Phi^n_\lambda$ is a composition of affine maps of the form $s\mapsto -A_{i,j}(\theta)s+B_{i,j}(\theta)$. Therefore, it is also affine in $s$ with coefficients depending only on the itinerary of the cylinder and on $\theta_\lambda$. Hence, the position $s_\lambda$ of the associated $\lambda^\pm$-stable periodic orbit is also uniquely determined by the itinerary. 
\end{proof}

\begin{proposition}\label{prop-odd}
Every periodic orbit with odd period is $\lambda$-stable.
\end{proposition}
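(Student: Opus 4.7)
My plan is to view the periodic-point condition $\Phi_\lambda^p(s,\theta)=(s,\theta)$ as the zero set of a smooth map and apply the implicit function theorem at $(\lambda,s,\theta)=(1,s_0,\theta_0)$, where $(s_0,\theta_0)$ is a representative of the given orbit $q$ and $p$ is its odd period. Setting
\[
F(\lambda,s,\theta):=\Phi_\lambda^p(s,\theta)-(s,\theta),
\]
the map $F$ is smooth on a neighborhood of $(1,s_0,\theta_0)$ in $\mathbb{R}\times M'$ because $\Phi_\lambda=R_\lambda\circ\Phi$ depends smoothly on $\lambda$ and $q$ avoids the singularity set of $\Phi$; moreover $F(1,s_0,\theta_0)=0$.

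The crucial step is to check that $\partial_{(s,\theta)}F(1,s_0,\theta_0)=D\Phi^p(s_0,\theta_0)-I$ is invertible. By \eqref{derivative}, the linearization of $\Phi$ at each iterate $(s_k,\theta_k)$ is minus an upper-triangular matrix whose $(1,1)$-entry equals $\cos\theta_k/\cos\theta_{k+1}$ and whose $(2,2)$-entry equals $1$. Multiplying these $p$ factors together and using $\theta_p=\theta_0$ to telescope the diagonal product to $1$ yields
\[
D\Phi^p(s_0,\theta_0)=(-1)^p\begin{pmatrix} 1 & a \\ 0 & 1 \end{pmatrix}
\]
for some $a\in\mathbb{R}$. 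For odd $p$ this is a matrix with diagonal $(-1,-1)$, so $D\Phi^p(s_0,\theta_0)-I$ has determinant $4$ and is invertible. \emph{This is precisely where odd parity enters}: for even $p$ the diagonal would both be $1$ and $D\Phi^p-I$ would be nilpotent, reflecting the nontrivial cylinder of period-$p$ orbits that exists in that case.

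The implicit function theorem then produces an open interval $I\ni 1$ and a smooth curve $\lambda\mapsto(s_\lambda,\theta_\lambda)\in M'$ with $F(\lambda,s_\lambda,\theta_\lambda)=0$ and $(s_1,\theta_1)=(s_0,\theta_0)$. Since $\Phi_\lambda$ is well-defined and smooth near $q$ for $\lambda$ in a full two-sided neighborhood of $1$ (the constraint $|\lambda\theta|<\pi/2$ is harmless when $|\theta|<\pi/2$ and $\lambda$ is close to $1$), the interval $I$ straddles $\lambda=1$. Continuity ensures that for $\lambda\in I$ sufficiently close to $1$ the forward iterates $\Phi_\lambda^k(s_\lambda,\theta_\lambda)$ lie in the same strips $\Sigma_{i_k,i_{k+1}}$ as the corresponding iterates of $q$, so the perturbed orbit realizes the itinerary of $q$. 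Choosing sequences $\lambda_n\to 1$ from above and from below inside $I\setminus\{1\}$ then produces orbits witnessing both $\lambda^+$- and $\lambda^-$-stability. I do not anticipate any substantial obstacle: the computation of $D\Phi^p$ is the one point that genuinely uses odd parity, and everything else is routine.
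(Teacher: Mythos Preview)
Your proof is correct and follows essentially the same approach as the paper: both compute $D\Phi^p(s_0,\theta_0)$ at $\lambda=1$, use the telescoping product $\prod_k \cos\theta_k/\cos\theta_{k+1}=1$ together with the sign $(-1)^p$ to obtain a matrix with diagonal $(-1,-1)$ when $p$ is odd, and then invoke the implicit function theorem. The paper phrases the diagonal computation through the auxiliary function $\rho_\lambda(\theta)=\cos(\lambda\theta)/\cos\theta$ (which equals $1$ at $\lambda=1$), while you telescope directly, but the content is identical.
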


\begin{proof}
Denote by $\{i_0,i_1,\ldots,i_{2n}\}$ the itinerary of the periodic orbit. For $(s_0,\theta_0)\in \Sigma_{i_0,i_1}\cap\Phi_\lambda^{-1}(\Sigma_{i_1,i_2})\cap\cdots\cap\Phi_\lambda^{-2n}(\Sigma_{i_{2n},i_0})$ we have, by \eqref{derivative}, that
$$
D\Phi_\lambda^{2n+1}(s_0,\theta_0)=-\begin{pmatrix}A_{i_0,\ldots,i_{2n+1}}(\theta_0)&*\\0&\lambda^{2n+1}\end{pmatrix},
$$
where,
$$
A_{i_0,\ldots,i_{2n+1}}(\theta_0)=\frac{\cos\theta_0}{\cos\bar{\theta}_{2n}}\rho_\lambda(\bar{\theta}_{0})\cdots\rho_\lambda(\bar{\theta}_{2n-1}),
$$
$\bar{\theta}_j=\beta_{i_{j},i_{j+1}}-\theta_{j}$ and,
$$
\rho_\lambda(\theta):=\frac{\cos(\lambda\theta)}{\cos(\theta)}.
$$
Notice that $\rho_\lambda(\theta)\neq 1$ for every $\theta\in\left(-\frac\pi2,\frac\pi2\right)\setminus\{0\}$ and $\rho_\lambda(\theta)=1$ if and only if $\theta=0$ or $\lambda=1$. Thus, if $(\hat{s}_0,\hat{\theta}_0)$ is $2n+1$-periodic for $\Phi$, then $A_{i_0,\ldots,i_{2n+1}}(\hat{\theta}_0)=1$ when $\lambda=1$. This implies that 
$$
\left.D\Phi^{2n+1}_\lambda(\hat{s}_0,\hat{\theta}_0)\right|_{\lambda=1}=\begin{pmatrix}-1&*\\0&-1\end{pmatrix}.
$$ 
By the implicit function theorem, there exists $\delta>0$ and analytic functions $s_0:(1-\delta,1+\delta)\to\Rr$ and $\theta_0:(1-\delta,1+\delta)\to(-\pi/2,\pi/2)$ such that $(s_0(\lambda),\theta_0(\lambda))$ is a $2n+1$-periodic point for $\Phi_\lambda$ and $(s_0(1),\theta_0(1))=(\hat{s}_0,\hat{\theta}_0)$.
\end{proof}

Galperin, Stepin and Vorobets have introduced and studied another notion of stability; a periodic orbit is called \emph{stable} if it
is not destroyed by any small deformation of the polygon (\cite{GSV} p.18). They have shown that a sufficient and necessary condition for a periodic orbit with period $n>1$ to be stable is that the alternating sum of the letters of the itinerary word $W=i_0\ldots i_{n-1}$ is zero, i.e., 
$$
i_0-i_1+i_2-i_3+\cdots \equiv 0.
$$
Using this criterion they proved that odd length periodic orbits are stable (\cite{GSV} p.28). Thus,

\begin{corollary}
Odd length periodic orbits are both stable and $\lambda$-stable.
\end{corollary}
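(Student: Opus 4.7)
The plan is to observe that the corollary is a direct juxtaposition of two facts that have already been collected in the exposition, so no new machinery is needed. First, the $\lambda$-stability half is exactly the content of Proposition~\ref{prop-odd}, which I would simply invoke. Second, the Galperin--Stepin--Vorobets stability half is the theorem of \cite{GSV} quoted in the paragraph preceding the statement, asserting that every odd-length periodic orbit is stable in the GSV sense. So the proof is a one-line citation of both results.

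If I wanted to make the GSV half self-contained rather than merely cited, I would verify the alternating-sum criterion by hand. Let $W = i_0 i_1 \cdots i_{n-1}$ be the itinerary of an odd-period orbit. Since $n$ is odd, the cylinder structure recalled earlier forces the combinatorial period of the associated cylinder to be $2n$, i.e.\ the relevant itinerary word for the GSV test is $W^2 = i_0 i_1 \cdots i_{n-1} i_0 i_1 \cdots i_{n-1}$. Writing $S = \sum_{k=0}^{n-1}(-1)^k i_k$, the alternating sum of $W^2$ equals $S + (-1)^n S$, which vanishes since $n$ is odd. Hence the GSV criterion is satisfied and the orbit is stable.

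There is no main obstacle here: the corollary is essentially bookkeeping, recording that the two a priori unrelated notions of stability happen to agree on the odd-period class. The only mild point to keep in mind is the convention that the cylinder of an odd-period orbit has period $2n$ rather than $n$, which is what makes the GSV alternating-sum criterion trivially applicable.
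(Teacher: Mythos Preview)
Your proposal is correct and matches the paper's approach exactly: the paper offers no formal proof at all, simply writing ``Thus,'' after citing the GSV result (\cite{GSV} p.28) and having just proved Proposition~\ref{prop-odd}. Your optional self-contained verification of the GSV criterion via the doubled word $W^2$ is a correct elaboration that the paper does not include, but the core argument---cite both results---is identical.
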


Even length periodic orbits, other than ping-pong orbits, behave quite differently.  We will discuss such orbits in Section \ref{sec:results} and will see that these notions of stability are unrelated. Before closing this section we see that the notions of $\lambda^+$ and $\lambda^-$ stability are related as follows. Given a periodic cylinder $\mathcal{C}$ of $\Phi$ we denote by $\mathcal{C}^{-1}$ the periodic cylinder of $\Phi$ obtained by reversing $\mathcal{C}$, i.e., by applying the involution $S$ defined in Subsection~\ref{pinball section}.  

\begin{lemma}\label{Cminus}
The periodic cylinder $\mathcal{C}$ is $\lambda^{-}$-stable if and only if the periodic cylinder $\mathcal{C}^{-1}$ is $\lambda^{+}$-stable.
\end{lemma}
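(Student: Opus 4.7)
The strategy is to transport everything through the involution $S$ using the conjugacy \eqref{conjugacy}. Since $S^2=\mathrm{id}$, \eqref{conjugacy} may be rewritten as
\[
\Phi_{\lambda^{-1}} \;=\; S\circ \Phi_{\lambda}^{-1}\circ S,
\]
and iterating gives $\Phi_{\lambda^{-1}}^{\,p} = S\circ \Phi_{\lambda}^{-p}\circ S$ for every $p\geq 1$. This has two clean consequences. First, $q$ is a $p$-periodic point of $\Phi_{\lambda}$ if and only if $S(q)$ is a $p$-periodic point of $\Phi_{\lambda^{-1}}$ (apply the identity to $S(q)$ and use $\Phi_{\lambda}^{-p}(q)=q$). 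Second, since $S$ reverses the direction of travel along an orbit and fixes the arc-length coordinate, the itinerary of $S(q)$ under $\Phi_{\lambda^{-1}}$ is the time-reversed itinerary of $q$ under $\Phi_{\lambda}$; by the definition of $\mathcal{C}^{-1}$, this reversed itinerary is precisely the itinerary shared by all orbits in $\mathcal{C}^{-1}$.

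\textbf{Main implication.} Assume $\mathcal{C}$ is $\lambda^{-}$-stable. Pick a strictly increasing sequence $\lambda_n\to 1$ with $\lambda_n<1$, and periodic orbits $q_n$ of $\Phi_{\lambda_n}$ with the itinerary of $\mathcal{C}$ such that $q_n\to q$ for some $q\in\mathcal{C}$. Set $\mu_n:=\lambda_n^{-1}$, so that $\{\mu_n\}$ is a strictly decreasing sequence converging to $1$ with $\mu_n>1$. By the observations above, $S(q_n)$ is a periodic orbit of $\Phi_{\mu_n}$ of the same period, with the itinerary of $\mathcal{C}^{-1}$. Continuity of $S$ yields $S(q_n)\to S(q)\in \mathcal{C}^{-1}$, and therefore $\mathcal{C}^{-1}$ is $\lambda^{+}$-stable.

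\textbf{Converse and obstacle.} The reverse implication is symmetric: given a $\lambda^+$-approximating sequence of periodic orbits of $\Phi_{\mu_n}$ in $\mathcal{C}^{-1}$, apply $S$ and pass to $\lambda_n:=\mu_n^{-1}<1$ increasing to $1$, and use $(\mathcal{C}^{-1})^{-1}=\mathcal{C}$. I do not anticipate a substantive difficulty: the lemma is essentially a bookkeeping translation of the time-reversal symmetry \eqref{conjugacy} into the language of $\lambda$-stability. The only thing that has to be verified carefully is the correspondence of one-sided limits, namely that $S$ converts monotone-from-below sequences of contraction parameters to monotone-from-above sequences of expansion parameters (that is, $\lambda_n<1$ strictly increasing if and only if $\lambda_n^{-1}>1$ strictly decreasing), which is immediate from the monotonicity of $t\mapsto t^{-1}$ on $(0,\infty)$.
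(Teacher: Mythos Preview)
Your proof is correct and follows essentially the same route as the paper: both arguments transport the approximating periodic orbits through the involution $S$ via the conjugacy \eqref{conjugacy}, observing that $\lambda_n\nearrow 1$ corresponds to $\lambda_n^{-1}\searrow 1$. You have been more explicit than the paper about the itinerary reversal and the monotonicity of $t\mapsto t^{-1}$, but the underlying idea is identical.
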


\begin{proof}
Suppose that $\mathcal{C}$ is $\lambda^-$-stable. By definition, there exists a periodic orbit $q_n$ and a strictly increasing sequence $\lambda_n\nearrow1$ such that $q_n$ converges to a periodic orbit in $\mathcal{C}$ and it is a periodic orbit of $\Phi_{\lambda_n}$. By \eqref{conjugacy}, $\Phi_{\lambda_n^{-1}}$ is conjugated to $\Phi_{\lambda_n}^{-1}$ by the involution map $S$. Thus, $S(q_n)$ is a periodic orbit for $\Phi_{\lambda_n^{-1}}$ and converges as $n\to\infty$ to a periodic orbit in $\mathcal{C}^{-1}$. So, $\mathcal{C}^{-1}$ is $\lambda^+$-stable. The other direction of the proof is analogous. 
\end{proof}


\section{Characterization of $\lambda$-stability}\label{nec and suf section}

Let $\mathcal{C}$ be a periodic cylinder of $\Phi$ having even period. Denote by $2n$ the period of the cylinder, $\{i_0,\ldots,i_{2n-1}\}$ the corresponding itinerary and $\hat{\theta}_0,\ldots,\hat{\theta}_{2n-1}$ the angles of the cylinder along the itinerary. We call $\hat{\theta}_0$ the \textit{angle of departure} of $\mathcal{C}$. 

\subsection{Necessary condition}

The following identity is well known as it is a simple consequence of \eqref{angle map}.

\begin{lemma}\label{prop-angles}
A necessary condition for the existence of an even length periodic orbit with periodic $2n$ for $\Phi$ with itinerary $\{i_0,\ldots,i_{2n-1}\}$ is that,
\begin{equation}\label{e:alternating}
\sum_{k=0}^{2n-1} (-1)^k \beta_{i_{k},i_{k+1}}   = 0.
\end{equation}
\end{lemma}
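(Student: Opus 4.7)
The plan is to iterate the angle recurrence supplied by \eqref{angle map}. Writing $\beta_k := \beta_{i_k,i_{k+1}}$ and $\theta_k$ for the angle at the $k$-th bounce of the orbit, the identity $\bar\theta = \beta_{i,j} - \theta$ becomes $\theta_{k+1} = \beta_k - \theta_k$, so repeated application should produce an alternating sum that telescopes once periodicity is imposed.

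First I would verify by induction on $m$ that
$$
\theta_m \;=\; (-1)^m \theta_0 \;+\; \sum_{k=0}^{m-1} (-1)^{m-1-k}\,\beta_k.
$$
The base case $m=0$ is immediate and the inductive step is a single application of the recurrence $\theta_{k+1} = \beta_k - \theta_k$.

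Next I would set $m = 2n$ and invoke periodicity $\theta_{2n} = \theta_0$. Since $(-1)^{2n} = 1$, the $\theta_0$ contributions cancel, leaving
$$
0 \;=\; \sum_{k=0}^{2n-1} (-1)^{2n-1-k}\,\beta_k \;=\; -\sum_{k=0}^{2n-1}(-1)^k\, \beta_{i_k,i_{k+1}},
$$
which is \eqref{e:alternating} up to a sign.

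There is no genuine obstacle here; the whole lemma is a telescoping of an affine recurrence. What the proof does make transparent, however, is the essential role played by the parity of the period: it is precisely because $(-1)^{2n} = +1$ that the $\theta_0$ contribution cancels and a constraint on the $\beta_k$ alone remains. For an odd period the same computation would instead pin down $\theta_0$ in terms of the $\beta_k$, which fits with the fact that no analogous itinerary constraint appears in the odd case and with the $\lambda$-stability of all odd-period orbits shown in Proposition~\ref{prop-odd}.
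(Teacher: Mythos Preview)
Your argument is correct and is precisely the computation the paper has in mind: the paper does not spell out a proof at all, merely remarking that the identity ``is a simple consequence of \eqref{angle map}.'' Your iteration of $\theta_{k+1}=\beta_k-\theta_k$ together with $\theta_{2n}=\theta_0$ is exactly that simple consequence, so there is nothing to add.
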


Using this lemma we obtain the following necessary condition for $\lambda$-stability.

\begin{proposition}\label{necessary condition}
If the cylinder $\mathcal{C}$ is $\lambda^{-}$-stable or $\lambda^{+}$-stable, then its angle of departure equals
$$
\frac{1}{2n}\sum_{k=0}^{2n-1}(-1)^{k+1}k\beta_{i_k,i_{k+1}}.
$$
\end{proposition}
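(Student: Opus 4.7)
The plan is to exploit the affine recursion for angles in \eqref{pinball map} to write the angle of departure $\theta_0(\lambda)$ of a $\lambda_n^{\pm}$-stable orbit explicitly as a rational function of $\lambda$, and then take the limit $\lambda\to 1$ using the necessary condition in Lemma~\ref{prop-angles}. Throughout, write $\beta_k := \beta_{i_k,i_{k+1}}$.

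First I would iterate the recursion $\theta_{k+1} = \lambda(\beta_k - \theta_k)$ starting from $\theta_0$. A straightforward induction yields
\begin{equation*}
\theta_k = (-1)^k \lambda^k \theta_0 \;+\; \sum_{j=0}^{k-1} (-1)^{k-1-j}\lambda^{k-j}\beta_j.
\end{equation*}
Applying the periodicity condition $\theta_{2n}=\theta_0$ gives
\begin{equation*}
(1-\lambda^{2n})\,\theta_0 \;=\; \sum_{j=0}^{2n-1} (-1)^{j+1}\lambda^{2n-j}\beta_j,
\end{equation*}
since $(-1)^{2n-1-j}=(-1)^{j+1}$. For any $\lambda\neq 1$ and any pre-assigned itinerary this uniquely determines the angle of departure of a periodic orbit of $\Phi_\lambda$ with that itinerary (consistent with Proposition~\ref{prop-unique}).

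Now suppose $\mathcal{C}$ is $\lambda^\pm$-stable. Then there exists a sequence $\lambda_n\to 1$ and periodic orbits of $\Phi_{\lambda_n}$ with the itinerary of $\mathcal{C}$ whose angles of departure $\theta_0(\lambda_n)$ converge to $\hat{\theta}_0$. Hence
\begin{equation*}
\hat{\theta}_0 \;=\; \lim_{\lambda\to 1}\frac{\sum_{j=0}^{2n-1} (-1)^{j+1}\lambda^{2n-j}\beta_j}{1-\lambda^{2n}}.
\end{equation*}
Both numerator and denominator vanish at $\lambda=1$: the denominator trivially, and the numerator because its value at $\lambda=1$ is $\sum_j (-1)^{j+1}\beta_j = -\sum_j(-1)^j\beta_j$, which is zero by Lemma~\ref{prop-angles} (this condition holds since $\mathcal{C}$ is a periodic cylinder of $\Phi$). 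Apply L'H\^opital's rule: differentiating numerator and denominator and evaluating at $\lambda=1$ gives
\begin{equation*}
\hat{\theta}_0 \;=\; \frac{\sum_{j=0}^{2n-1}(-1)^{j+1}(2n-j)\beta_j}{-2n} \;=\; \sum_{j=0}^{2n-1}(-1)^{j}\frac{2n-j}{2n}\beta_j.
\end{equation*}
Splitting this sum and once more invoking Lemma~\ref{prop-angles} to cancel the term $\sum_j(-1)^j\beta_j$ leaves precisely $\frac{1}{2n}\sum_{j=0}^{2n-1}(-1)^{j+1}j\beta_{i_j,i_{j+1}}$, as claimed.

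The main obstacle is only bookkeeping of signs in the closed-form iteration and making sure the argument uses the necessary condition \eqref{e:alternating} at the right moment — namely, to justify that the L'H\^opital limit applies and to simplify the final expression. No compactness or continuity argument beyond the convergence $\theta_0(\lambda_n)\to \hat{\theta}_0$ built into the definition of $\lambda^\pm$-stability is needed, since the expression for $\theta_0(\lambda)$ is rational in $\lambda$ and the limit is two-sided in the same formula, so both $\lambda^-$ and $\lambda^+$ stability yield the same value.
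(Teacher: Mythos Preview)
Your proposal is correct and follows essentially the same approach as the paper: solve the affine angle recursion explicitly for $\theta_0(\lambda)$, observe the numerator vanishes at $\lambda=1$ by Lemma~\ref{prop-angles}, apply L'H\^opital's rule, and simplify. You are slightly more explicit than the paper in deriving the closed form by induction and in noting that Lemma~\ref{prop-angles} is invoked a second time to simplify the final expression, but the argument is the same.
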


\begin{proof}
Any periodic point for $\Phi_\lambda$ having the itinerary $\{i_0,\ldots, i_{2n-1}\}$ has an angle at $i_0$ equal to,
$$
\frac{1}{\lambda^{2n}-1}\sum_{k=0}^{{2n}-1}(-\lambda)^{{2n}-k}\beta_{i_k,i_{k+1}}.
$$
When $\lambda\to 1$, the numerator in the previous fraction is
$$
\sum_{k=0}^{{2n}-1}(-1)^{{2n}-k}\beta_{i_k,i_{k+1}},
$$
which is zero by Lemma~\ref{prop-angles}. Thus, by L'H\^otipal's rule,
\begin{align*}
\lim_{\lambda\to1}\frac{\sum_{k=0}^{{2n}-1}(-\lambda)^{{2n}-k}\beta_{i_k,i_{k+1}}}{\lambda^{2n}-1}&= \lim_{\lambda\to1}\frac{\sum_{k=0}^{{2n}-1}(k-2n)(-\lambda)^{{2n}-k-1}\beta_{i_k,i_{k+1}}}{2n\lambda^{2n-1}}
\\
&= \frac{1}{{2n}}\sum_{k=0}^{{2n}-1}(-1)^{k+1}k\beta_{i_k,i_{k+1}}.
\end{align*}
\end{proof}



%

\subsection{Sufficient conditions}

For each pair of distinct sides $i$ and $j$ of $P$ let $\Phi_{i,j}$ be the restriction map of $\Phi$ to the domain $\Sigma_{i,j}$. Since $\Phi_{i,j}$ is affine in the first component (see \eqref{pinball map}), it can be extended to all $(s,\theta)\in\Rr\times(-\pi/2,\pi/2)$ provided $|\beta_{i,j}-\theta|<\pi/2$. Denoting by $\ell_i$ the supporting line of the $i$-th side $\sigma_i$ of $P$ and by $x_i$ the arc-length parametrization of $\ell_i$ obtained by extending the parametrization of $\sigma_i$, then the first component of $\Phi_{i,j}$ can be seen as the projection of the point $x_i(s)\in\ell_i$ onto the line $\ell_j$ along the angle $\theta$. Let $(s',\theta')=\Phi_{i,j}(s,\theta)$. Then by \eqref{derivative},
\begin{equation}\label{branch derivative}
\frac{\partial s'}{\partial s}=-\frac{\cos\theta}{\cos(\beta_{i,j}-\theta)}\quad\text{and}\quad\frac{\partial s'}{\partial\theta}=-\frac{t(s,\theta)}{\cos(\beta_{i,j}-\theta)},
\end{equation}
where $t(s,\theta)$ now represents the oriented length of segment joining the points $x_i(s)$ and $x_j(s')$ that lie on the oriented line defined by $(s,\theta)$. 

\bigskip 

Throughout the rest of this section we suppose that the period of the periodic cylinder $\mathcal{C}$ is greater than two and that its angle of departure $\hat{\theta}_0$ is given by the expression in Proposition~\ref{necessary condition}. 

Define the functions,
\begin{equation}\label{theta0}\begin{split}
\theta_0(\lambda)&:=\frac{1}{\lambda^{2n}-1}\sum_{k=0}^{{2n}-1}(-\lambda)^{{2n}-k}\beta_{i_k,i_{k+1}},\\
\theta_{k}(\lambda)&:=\lambda(\beta_{i_{k-1},i_{k}}-\theta_{k-1}(\lambda)),\quad k=1,\ldots,2n.
\end{split}
\end{equation}
Clearly, 
$$
\lim_{\lambda\to1}\theta_k(\lambda)=\hat{\theta}_k\quad\text{and}\quad\theta_{2n}(\lambda)=\theta_0(\lambda).
$$
Hence, we can find $\delta>0$ such that $\theta_k(\lambda)\in(-\pi/2,\pi/2)$ for every $\lambda\in(1-\delta,1+\delta)$ and every $k=0,\ldots,2n-1$. 
So, for each $k=1,\ldots,2n$, the function $F_{\mathcal{C},k}:\Rr\times(1-\delta,1+\delta)\to\Rr$ is well defined by the formula,
$$
F_{\mathcal{C},k}(s,\lambda):=\pi_1\circ R_\lambda\circ\Phi_{i_{k-1},i_{k}}\circ\cdots\circ R_\lambda\circ\Phi_{i_0,i_1} (s,\theta_0(\lambda)),
$$
where $\pi_1$ denotes the projection $(s,\theta)\mapsto s$ and $R_\lambda$ the map defined in subsection~\ref{pinball section}. Note that $F_{\mathcal{C},2n}(s,1)=s$ for every $s\in\Rr$. In order to simplify the notation we simply write $F_k$ in place of $F_{\mathcal{C},k}$. 


\begin{lemma}\label{F}
$F_{2n}(\cdot,\lambda)$ is an affine map, orientation-preserving and depends analytically on $\lambda$. Moreover, it is expanding (contracting) if and only if $\lambda<1$ ($\lambda>1$).
\end{lemma}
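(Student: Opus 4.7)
My plan is to dispatch the formal assertions (affine, analytic, orientation-preserving) quickly and then focus on the slope analysis for the expansion/contraction dichotomy.

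That $F_{2n}(\cdot,\lambda)$ is affine in $s$ is immediate: each branch $\Phi_{i,j}$ is affine in $s$ by \eqref{pinball map}, $R_\lambda$ acts only on $\theta$, and the angles $\theta_k(\lambda)$ entering each branch depend on $\lambda$ but not on $s$. Analyticity in $\lambda$ follows because $\theta_0(\lambda)$ is a rational function of $\lambda$ (no pole on $(1-\delta,1+\delta)$ after shrinking $\delta$), the subsequent $\theta_k(\lambda)$ are then analytic, and both $A_{i,j}(\theta)$ and $B_{i,j}(\theta)$ are real-analytic in $\theta$ on the relevant domain.

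For the slope, the chain rule together with \eqref{branch derivative} gives
\begin{equation*}
\frac{\partial F_{2n}}{\partial s}(\cdot,\lambda) = \prod_{k=0}^{2n-1}\left(-\frac{\cos\theta_k(\lambda)}{\cos(\beta_{i_k,i_{k+1}}-\theta_k(\lambda))}\right).
\end{equation*}
The overall sign is $(-1)^{2n}=+1$, giving orientation-preservation. Using the recursion $\theta_{k+1}(\lambda)=\lambda(\beta_{i_k,i_{k+1}}-\theta_k(\lambda))$ from \eqref{theta0} to replace each denominator by $\cos(\theta_{k+1}(\lambda)/\lambda)$, and then reindexing via the closing relation $\theta_{2n}(\lambda)=\theta_0(\lambda)$, one obtains the clean identity
\begin{equation*}
\frac{\partial F_{2n}}{\partial s}(\cdot,\lambda) = \prod_{k=0}^{2n-1}\frac{\cos\theta_k(\lambda)}{\cos(\theta_k(\lambda)/\lambda)}.
\end{equation*}

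The dichotomy is now immediate from strict monotonicity of cosine on $[0,\pi/2)$: for $\lambda<1$, $|\theta_k(\lambda)/\lambda|\geq|\theta_k(\lambda)|$ forces every factor to be $\geq 1$, strictly so whenever $\theta_k(\lambda)\neq 0$, and symmetrically for $\lambda>1$. The one delicate point is ruling out the degenerate case in which every $\theta_k(\lambda)$ vanishes: if all $\hat\theta_k$ were zero, the trajectory would bounce perpendicularly between two parallel sides and the cylinder would have primitive period two, contradicting the standing hypothesis $2n>2$. Hence at least one $\hat\theta_k\neq 0$, and by continuity the corresponding $\theta_k(\lambda)\neq 0$ on a neighborhood of $1$, so the product is strictly greater than $1$ for $\lambda<1$ and strictly less than $1$ for $\lambda>1$. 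The main obstacle is the telescoping identification of the slope; the rest is bookkeeping and elementary trigonometry.
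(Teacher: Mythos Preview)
Your proof is correct and follows essentially the same route as the paper: both compute the slope as a telescoping product of cosine ratios using the closing relation $\theta_{2n}(\lambda)=\theta_0(\lambda)$, and both use the period $>2$ hypothesis to exclude the all-perpendicular degenerate case. Your factor $\cos\theta_k(\lambda)/\cos(\theta_k(\lambda)/\lambda)$ is exactly the paper's $\rho_\lambda(\bar\theta_{k-1}(\lambda))=\cos(\lambda\bar\theta_{k-1})/\cos\bar\theta_{k-1}$ after the index shift $\theta_k/\lambda=\bar\theta_{k-1}$, so the arguments coincide.
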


\begin{proof}
The map is affine because it is a composition of maps $R_\lambda\circ \Phi_{i,j}$ whose first component is affine in the variable $s$ (see \eqref{pinball map}). Since $\theta_0(\lambda)$ is analytic for $\lambda>0$ (has a removable singularity at $\lambda=1$), $F_{2n}(\cdot,\lambda)$ is also analytic in $\lambda$. Moreover, by \eqref{branch derivative} we have
\begin{align*}
\frac{\partial F_{2n}}{\partial s}(s,\lambda)&=A_{i_0,i_1}(\theta_0(\lambda))\cdots A_{i_{2n-1},i_{2n}}(\theta_{2n-1}(\lambda))\\
&=\rho_\lambda(\bar{\theta}_{0}(\lambda))\cdots\rho_\lambda(\bar{\theta}_{2n-1}(\lambda))
\end{align*}
where
$\bar{\theta}_j(\lambda)=\beta_{i_{j},i_{j+1}}-\theta_{j}(\lambda)$ and $\rho_\lambda$ is the function defined in the proof of Proposition~\ref{prop-odd}. Because, all $\theta_k(\lambda)\in(-\pi/2,\pi/2)$, we have $\rho_\lambda(\bar{\theta}_k(\lambda))>0$ for every $k=0,\ldots,2n-1$. Thus, $F_{2n}(\cdot,\lambda)$ is orientation-preserving. Clearly,  $\lambda<1$ if and only if $\rho_\lambda(\theta)> 1$ for every $\theta\in\left(-\frac\pi2,\frac\pi2\right)\setminus\{0\}$. Since the period of the cylinder is greater than two, $\rho_\lambda(\bar{\theta}_k(\lambda))>1$ for some $\bar{\theta}_k$.  Thus, $F_{2n}(\cdot,\lambda)$ is expanding whenever $\lambda<1$. A similar argument shows that $F_{2n}(\cdot,\lambda)$ is contracting whenever $\lambda>1$.
\end{proof}

Let $I_\mathcal{C}$ denote the base of the cylinder $\mathcal{C}$, i.e.,the set of parameters $s\in \Rr$ such that $(s,\hat{\theta}_0)$ is a periodic point of $\Phi$ having the same itinerary of the cylinder. The elements of $I_\mathcal{C}$ are called \textit{base foot points}. Note that $I_\mathcal{C}$ is an open interval. 

Using the previous lemma we see that $\lambda^-$ and $\lambda^+$ stability notions coincide. 

\begin{proposition}\label{lambda plus minus}
If $\mathcal{C}$ is $\lambda^{-}$-stable or $\lambda^{+}$-stable, then $\mathcal{C}$ is $\lambda$-stable.
\end{proposition}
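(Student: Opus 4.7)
My plan is to exploit the explicit affine structure of $F_{2n}$ from Lemma~\ref{F} to pin down a unique analytic candidate for the periodic point of $\Phi_\lambda$ with the itinerary of $\mathcal{C}$, and then to transfer the hypothesised one-sided convergence across $\lambda=1$ by real-analyticity.

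First I would write $F_{2n}(s,\lambda)=a(\lambda)\,s+b(\lambda)$, where by Lemma~\ref{F} the coefficient $a(\lambda)$ satisfies $a(\lambda)>1$ for $\lambda<1$ and $a(\lambda)<1$ for $\lambda>1$. Hence for every $\lambda\neq 1$ near $1$, the map $F_{2n}(\cdot,\lambda)$ has a unique fixed point
$$
s_\lambda:=\frac{b(\lambda)}{1-a(\lambda)},
$$
and, combined with the angle $\theta_0(\lambda)$ from \eqref{theta0}, the pair $(s_\lambda,\theta_0(\lambda))$ is the only candidate periodic point of $\Phi_\lambda$ realising the given itinerary; its actual realisation requires in addition that the straightened iterates lie in the correct cells $\Sigma_{i_{k-1},i_k}$. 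If $\mathcal{C}$ is $\lambda^-$-stable, Proposition~\ref{prop-unique} forces the hypothesised periodic points to be $(s_{\lambda_n},\theta_0(\lambda_n))$ along some $\lambda_n\nearrow 1$, so $s_{\lambda_n}\to \hat s_0$ for some $\hat s_0\in I_\mathcal{C}$.

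The central step is an analytic continuation across $\lambda=1$. Both $a(\lambda)$ and $b(\lambda)$ are real-analytic at $\lambda=1$ (the removable singularity of $\theta_0(\lambda)$ noted in Lemma~\ref{F} is crucial), and $1-a(\lambda)$ has an isolated zero at $\lambda=1$. Being a ratio of real-analytic functions with isolated zero in the denominator, $\lambda\mapsto s_\lambda$ either extends real-analytically to $\lambda=1$ or has one-sided limits equal to $\pm\infty$. The finite limit $\hat s_0$ along $\lambda_n$ excludes the latter, so
$$
\lim_{\lambda\to 1^+}s_\lambda=\lim_{\lambda\to 1^-}s_\lambda=\hat s_0.
$$

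To finish, I would observe that $(\hat s_0,\hat\theta_0)\in\mathcal{C}$ lies in the open set $\Sigma_{i_0,i_1}\cap\Phi^{-1}(\Sigma_{i_1,i_2})\cap\cdots\cap\Phi^{-(2n-1)}(\Sigma_{i_{2n-1},i_0})$; by joint continuity of $\Phi_\lambda$ in $(s,\theta,\lambda)$, the analogous open condition for $\Phi_\lambda$ also holds for $\lambda>1$ close to $1$, and therefore $(s_\lambda,\theta_0(\lambda))$ is a bona fide periodic point of $\Phi_\lambda$ with the prescribed itinerary, converging to $(\hat s_0,\hat\theta_0)$ as $\lambda\to 1^+$. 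This yields $\lambda^+$-stability from $\lambda^-$-stability; the reverse implication is symmetric (or one may simply invoke Lemma~\ref{Cminus} twice). The step I expect to be most delicate is the meromorphic extension: it needs $1-a(\lambda)$ to have an \emph{isolated} zero at $\lambda=1$ (ensured by the monotonicity in Lemma~\ref{F}) and $a,b$ to be genuinely real-analytic across $\lambda=1$, both of which ultimately reduce to the explicit formulas in \eqref{pinball map} and the removable-singularity observation for $\theta_0$.
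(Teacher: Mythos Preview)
Your proposal is correct and follows essentially the same approach as the paper: both arguments identify the unique fixed point of the affine map $F_{2n}(\cdot,\lambda)$ as the only candidate periodic foot point, use real-analyticity in $\lambda$ (with the removable singularity of $\theta_0$ at $\lambda=1$) to extend it across $\lambda=1$, and conclude. Your version is somewhat more explicit---you spell out $s_\lambda=b(\lambda)/(1-a(\lambda))$, argue directly that the finite one-sided limit forces the singularity to be removable rather than a pole, and verify by continuity that the itinerary condition persists for $\lambda>1$---whereas the paper compresses all of this into the single sentence ``Because $F_{2n}(\cdot,\lambda)$ is affine and analytic in $\lambda$, we can analytically extend $s_0$.''
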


\begin{proof}
Suppose that $\mathcal{C}$ is $\lambda^-$-stable. The other case is analogous.
Let $s_0(\lambda_n)\in I_\mathcal{C}$ denote the base foot point corresponding to a periodic orbit $q(\lambda_n)$ of $\Phi_{\lambda_n}$ which converges to a periodic orbit in $\mathcal{C}$ as $n\to\infty$. By Proposition~\ref{prop-unique},  the periodic orbit $q(\lambda_n)$ is unique. In fact, $s_0(\lambda_n)=F_{2n}(s_0(\lambda_n),\lambda_n)$ for every $n\geq1$. Because $F_{2n}(\cdot,\lambda)$ is affine and analytic in $\lambda$, we can analytically extend $s_0$ to every
 $\lambda\in(1-\varepsilon,1+\varepsilon)$ for some $\varepsilon>0$. 
This extension gives a periodic orbit $q(\lambda)$ of $\Phi_{\lambda}$ for every $\lambda\in(1-\varepsilon,1+\varepsilon)$. Thus $\mathcal{C}$ is $\lambda$-stable.
\end{proof}

\begin{remark}
In fact, in the proof of Proposition~\ref{lambda plus minus}, we show that a periodic cylinder $\mathcal{C}$ is $\lambda^{\pm}$-stable if and only if there exists $\varepsilon>0$ such that for every $\lambda\in(1-\varepsilon,1+\varepsilon)$ the map $\Phi_\lambda$ has a periodic orbit $q_\lambda$ having the same itinerary of $\mathcal{C}$ such that $\lim_{\lambda\to1}q_\lambda$ is contained in the cylinder $\mathcal{C}$.
\end{remark}

As an immediate consequence of Lemma~\ref{Cminus} and Proposition~\ref{lambda plus minus} we obtain the following result.

\begin{corollary}\label{C and C minus}
$\mathcal{C}$ is $\lambda$-stable if and only if $\mathcal{C}^{-1}$ is $\lambda$-stable.
\end{corollary}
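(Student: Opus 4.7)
The plan is to chain together the two immediately preceding results. The statement is a biconditional about $\lambda$-stability, which by definition means both $\lambda^+$-stability and $\lambda^-$-stability simultaneously. Lemma~\ref{Cminus} converts $\lambda^-$-stability of $\mathcal{C}$ into $\lambda^+$-stability of $\mathcal{C}^{-1}$ (and vice versa), while Proposition~\ref{lambda plus minus} upgrades either one-sided notion into full $\lambda$-stability. Composing these two moves should yield the corollary almost mechanically.

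First I would assume $\mathcal{C}$ is $\lambda$-stable. Then in particular $\mathcal{C}$ is $\lambda^-$-stable, so Lemma~\ref{Cminus} gives that $\mathcal{C}^{-1}$ is $\lambda^+$-stable. Proposition~\ref{lambda plus minus} then promotes this one-sided stability to full $\lambda$-stability of $\mathcal{C}^{-1}$. For the converse direction I would run the same argument with the roles of $\mathcal{C}$ and $\mathcal{C}^{-1}$ swapped, using the evident identity $(\mathcal{C}^{-1})^{-1}=\mathcal{C}$ so that applying Lemma~\ref{Cminus} to $\mathcal{C}^{-1}$ outputs a statement about $\mathcal{C}$.

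There is essentially no obstacle here; the only small point worth verifying is that reversing a periodic cylinder is an involution on cylinders (so the converse direction genuinely follows by symmetry), which is clear from the fact that the involution $S:(s,\theta)\mapsto(s,-\theta)$ used to define $\mathcal{C}^{-1}$ in Subsection~\ref{pinball section} satisfies $S\circ S=\mathrm{id}$. Given how short the proof is, I would keep it to just a few lines and avoid restating the hypotheses of Lemma~\ref{Cminus} and Proposition~\ref{lambda plus minus}.
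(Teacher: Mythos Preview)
Your proposal is correct and follows exactly the approach indicated in the paper, which simply states that the corollary is an immediate consequence of Lemma~\ref{Cminus} and Proposition~\ref{lambda plus minus}. Your write-up spells out the chaining of these two results in precisely the natural way.
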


The following result gives a sufficient and necessary condition for $\lambda$-stability. 


\begin{proposition}\label{lem: F}
The periodic cylinder $\mathcal{C}$ is $\lambda$-stable if and only if there are $a,b\in I_\mathcal{C}$ with $a<b$ and $0<\lambda_0<1$ such that
$$
F_{2n}(a,\lambda)< a\quad\text{and}\quad F_{2n}(b,\lambda)> b,\quad\forall\,\lambda\in(\lambda_0,1).
$$
Moreover, if the above condition holds, then the $\lambda$-stable periodic orbit contained in $\mathcal{C}$ has base foot point $s\in(a,b)$.
\end{proposition}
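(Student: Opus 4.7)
The plan is to translate $\lambda$-stability of $\mathcal{C}$ into the existence of a fixed point of the affine map $F_{2n}(\cdot,\lambda)$ bracketed inside $I_\mathcal{C}$, using the expansion/contraction dichotomy from Lemma \ref{F} together with the analytic-extension mechanism provided by Proposition \ref{lambda plus minus} and its following remark. The core observation is that, for $\lambda<1$, the map $F_{2n}(\cdot,\lambda)$ is affine with slope strictly greater than one, so $s\mapsto F_{2n}(s,\lambda)-s$ is strictly increasing and changes sign at its unique zero; a bracket $a<b$ with opposite signs therefore pinpoints that zero and vice versa.

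For the sufficient direction, I would argue as follows. Since $I_\mathcal{C}$ is an open interval containing $a$ and $b$, the whole segment $[a,b]$ lies in $I_\mathcal{C}$. For each $\lambda\in(\lambda_0,1)$, Lemma \ref{F} combined with the hypothesis $F_{2n}(a,\lambda)<a$ and $F_{2n}(b,\lambda)>b$ yields, via the intermediate value theorem applied to $s\mapsto F_{2n}(s,\lambda)-s$, a unique fixed point $s_0(\lambda)\in(a,b)$. This fixed point is the base foot point of a periodic orbit of $\Phi_\lambda$ with itinerary $\{i_0,\dots,i_{2n-1}\}$ and angle of departure $\theta_0(\lambda)$. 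Because $s_0(\lambda)$ stays in the compact set $[a,b]\subset I_\mathcal{C}$ and $\theta_0(\lambda)\to\hat\theta_0$ as $\lambda\to 1^-$, any subsequential limit is a periodic orbit of $\Phi$ lying in $\mathcal{C}$, which makes $\mathcal{C}$ at least $\lambda^-$-stable, hence $\lambda$-stable by Proposition \ref{lambda plus minus}.

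For the necessary direction, I would invoke the remark after Proposition \ref{lambda plus minus}, which gives an analytic family $\lambda\mapsto s_0(\lambda)$ of fixed points of $F_{2n}(\cdot,\lambda)$, defined on a neighborhood of $1$, with $s^*:=s_0(1)\in I_\mathcal{C}$. Since $I_\mathcal{C}$ is open I can pick $a,b\in I_\mathcal{C}$ with $a<s^*<b$, and continuity of $s_0$ forces $a<s_0(\lambda)<b$ for all $\lambda$ in a one-sided interval $(\lambda_0,1)$. Lemma \ref{F} then provides the strict inequalities $F_{2n}(a,\lambda)<a$ and $F_{2n}(b,\lambda)>b$ on $(\lambda_0,1)$ from the fact that $s\mapsto F_{2n}(s,\lambda)-s$ is strictly increasing and vanishes only at $s_0(\lambda)$.

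The ``moreover'' clause follows from the sufficiency argument, where $s_0(\lambda)\in(a,b)$ strictly for every $\lambda\in(\lambda_0,1)$; by Proposition \ref{prop-unique} this is the unique periodic orbit of $\Phi_\lambda$ with the given itinerary, so its limit as $\lambda\to 1^-$ is precisely the base foot point of the $\lambda$-stable orbit in $\mathcal{C}$. The main technical point I anticipate is upgrading the resulting containment $s^*\in[a,b]$ to the strict $s^*\in(a,b)$ claimed in the statement; I would handle this by ruling out tangency of the analytic curve $\lambda\mapsto s_0(\lambda)$ with an endpoint at $\lambda=1$, using that $I_\mathcal{C}$ is open and $F_{2n}(\cdot,1)$ is the identity to shrink the bracket slightly whenever needed while preserving the strict sign condition.
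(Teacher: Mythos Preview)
Your proposal is correct and follows essentially the same approach as the paper's proof: both directions hinge on Lemma~\ref{F} (affine, orientation-preserving, expanding for $\lambda<1$) to locate the unique fixed point of $F_{2n}(\cdot,\lambda)$ inside $[a,b]$, then use compactness and Proposition~\ref{lambda plus minus} for sufficiency, and the analytic family $s_0(\lambda)$ with $s_0(1)\in I_{\mathcal{C}}$ for necessity. Your anticipated $\varepsilon$-shrinking of the bracket to upgrade $s^*\in[a,b]$ to $s^*\in(a,b)$ is exactly what the paper does as well.
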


\begin{proof}
By Lemma~\ref{F}, $F_{2n}(\cdot,\lambda)$ is affine, orientation-preserving and expanding for $\lambda<1$.  Suppose that the interval $[a,b]$ is strictly contained inside $F_{2n}([a,b],\lambda)$. So, the map $F_{2n}(\cdot,\lambda)$ has a fixed point $s_0(\lambda)$ for every $\lambda\in(\lambda_0,1)$. 
Since $s_0(\lambda)\in [a,b]$ for every $\lambda_0\in(\lambda_0,1)$ we can take a convergent subsequence $s_0(\lambda_n)$ with $\lambda_n\in(\lambda_0,1)$,  $n\geq1$, strictly increasing and converging to $1$ as $n\to\infty$. This shows that $(s_0(\lambda_n),\theta_0(\lambda_n))$ is a periodic orbit for $\Phi_{\lambda_n}$ having the same itinerary of the cylinder $\mathcal{C}$ and converging to a periodic orbit in $\mathcal{C}$. Thus, $\mathcal{C}$ is $\lambda^-$-stable and by Proposition~\ref{lambda plus minus} it is $\lambda$-stable. 
Now we show that $s_0(1)\in(a,b)$. By continuity, for every $\varepsilon>0$ sufficiently small, we can find $\hat{\lambda}_0\in(\lambda_0,1)$ such that $F_{2n}(a+\varepsilon,\lambda)<a+\varepsilon$ and $F_{2n}(b-\varepsilon,\lambda)>b-\varepsilon$ for every $\lambda\in(\hat{\lambda}_0,1)$. Therefore, $s_0(1)\in[a+\varepsilon,b-\varepsilon]$ by repeating the first part of the proof.

To prove the other direction suppose that $\mathcal{C}$ is $\lambda$-stable. As in the proof of Proposition~\ref{lambda plus minus}, let $s_0(\lambda)\in I_\mathcal{C}$ denote the base foot point corresponding to a periodic orbit $q(\lambda)$ of $\Phi_{\lambda}$ which converges to a periodic orbit in $\mathcal{C}$ as $\lambda\to1$. Since $s_0(1)\in I_\mathcal{C}$, there is $\varepsilon>0$ small such that $J_\varepsilon:=(s_0(1)-\varepsilon,s_0(1)+\varepsilon)\subset I_\mathcal{C}$ and $s_0(\lambda)\in J_\varepsilon$ for every $\lambda$ sufficiently close to $1$. Because  $F_{2n}(\cdot,\lambda)$ is affine, orientation-preserving and expanding when $\lambda<1$, we conclude that $J_\varepsilon\subsetneq F_{2n}(J_\varepsilon,\lambda)$ for every $\lambda<1$ sufficiently close to $1$. This concludes the proof.
\end{proof}


\begin{lemma}\label{suf condition}
Let $a,b\in I_\mathcal{C}$ with $a<b$. If 
$$
\frac{\partial F_{2n}}{\partial\lambda}(a,1)>0\quad\text{and}\quad\frac{\partial F_{2n}}{\partial\lambda}(b,1)<0,
$$ then the periodic cylinder $\mathcal{C}$ is $\lambda$-stable. Moreover, the $\lambda$-stable periodic orbit contained in $\mathcal{C}$ has base foot point $s\in(a,b)$. 
\end{lemma}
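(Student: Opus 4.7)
The plan is to reduce this lemma to Proposition~\ref{lem: F} via a first-order Taylor expansion of $F_{2n}$ in $\lambda$ at $\lambda=1$. The key observation is that $F_{2n}(\cdot,1)$ is the identity (since $\Phi_{1}=\Phi$ and every $s\in I_\mathcal{C}$ is a fixed point of the return map), so that $F_{2n}(s,1)-s\equiv 0$ and the linear term in $(\lambda-1)$ controls the sign of $F_{2n}(s,\lambda)-s$ for $\lambda$ close to $1$.

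More precisely, first I would note that by Lemma~\ref{F} the function $F_{2n}(\cdot,\lambda)$ is analytic jointly in $(s,\lambda)$ on $I_\mathcal{C}\times(1-\delta,1+\delta)$ (since the itineraries of $(a,\theta_0(\lambda))$ and $(b,\theta_0(\lambda))$ remain prescribed in small open neighborhoods of the symbolic cells for $\lambda$ close to $1$). Writing the Taylor expansion at $\lambda=1$ for a fixed $s\in\{a,b\}$ and using $F_{2n}(s,1)=s$, I get
\begin{equation*}
F_{2n}(s,\lambda)-s \;=\; \frac{\partial F_{2n}}{\partial\lambda}(s,1)\,(\lambda-1)\;+\;O\!\left((\lambda-1)^{2}\right).
\end{equation*}
Since $\frac{\partial F_{2n}}{\partial \lambda}(a,1)>0$ by hypothesis, there is $\lambda_1<1$ such that $F_{2n}(a,\lambda)-a$ has the same sign as $(\lambda-1)$ for every $\lambda\in(\lambda_1,1)$, i.e.\ $F_{2n}(a,\lambda)<a$ on $(\lambda_1,1)$. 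Analogously, since $\frac{\partial F_{2n}}{\partial \lambda}(b,1)<0$, there is $\lambda_2<1$ such that $F_{2n}(b,\lambda)>b$ for all $\lambda\in(\lambda_2,1)$.

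Setting $\lambda_0:=\max\{\lambda_1,\lambda_2\}$, both strict inequalities hold simultaneously on $(\lambda_0,1)$, and Proposition~\ref{lem: F} applies, yielding that $\mathcal{C}$ is $\lambda$-stable with the associated base foot point in $(a,b)$. The only point that needs a bit of care — and which I regard as the most delicate step, though still routine — is verifying that $F_{2n}(\cdot,\lambda)$ is defined and analytic at $(a,1)$ and $(b,1)$; this uses that $I_\mathcal{C}$ is open in the base parametrization and that the symbolic cells $\Sigma_{i_k,i_{k+1}}$ are open, so that the joint analytic formula for $F_{2n}$ extends to a neighborhood of $\{a,b\}\times\{1\}$, which legitimizes the Taylor expansion employed above.
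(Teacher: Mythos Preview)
Your proof is correct and follows the same route as the paper: use $F_{2n}(s,1)=s$ together with the sign hypotheses on $\partial F_{2n}/\partial\lambda$ to obtain $F_{2n}(a,\lambda)<a$ and $F_{2n}(b,\lambda)>b$ for $\lambda$ just below $1$, then invoke Proposition~\ref{lem: F}. The paper compresses your Taylor-expansion step into the single phrase ``by continuity''; also, your caveat about the domain of $F_{2n}$ is unnecessary here, since the paper has already extended each branch $\Phi_{i,j}$ (and hence $F_{2n}$) to all of $\mathbb{R}\times(1-\delta,1+\delta)$ regardless of the symbolic cells.
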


\begin{proof}
By continuity, there exists $\lambda_0>0$ such that for every $\lambda\in(\lambda_0,1)$ we have $F_{2n}(a,\lambda)<a$ and $F_{2n}(b,\lambda)>b$. So, the cylinder $\mathcal{C}$ is $\lambda$-stable by Proposition~\ref{lem: F}.
\end{proof}


%

Given $s\in I_\mathcal{C}$, define for each $k=0,\ldots,2n-1$, 
$$
s_k:=F_{k}(s,1)\quad\text{and}\quad \theta_k:=\lim_{\lambda\to1}\theta_k(\lambda)=\hat{\theta}_k.
$$
Recall that $t(s_k,\theta_k)$ is the oriented length of the segment joining the points $x_{i_k}(s_k)$ and $x_{i_{k+1}}(s_{k+1})$. 
\begin{lemma}\label{derivative F} For every $s\in I_\mathcal{C}$,
$$
\frac{\partial F_{2n}}{\partial\lambda}(s,1)=\sum_{i=0}^{2n-1}\frac{t(s_i,\theta_i)}{\cos\theta_{0}}\left.\frac{d\theta_{0}}{d\lambda}\right|_{\lambda=1}+\sum_{k=1}^{2n-1}(-1)^k\theta_{k}\sum_{i=k}^{2n-1}\frac{t(s_i,\theta_i)}{\cos\theta_{0}}.
$$
\end{lemma}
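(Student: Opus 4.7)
The plan is to parametrize the entire broken trajectory by $\lambda$ and differentiate the defining recurrences at $\lambda=1$. Concretely, I would set $s_0(\lambda)\equiv s$, take $\theta_0(\lambda)$ as in \eqref{theta0}, and inductively define $(s_k(\lambda),\theta_k(\lambda)):=R_\lambda\circ\Phi_{i_{k-1},i_k}(s_{k-1}(\lambda),\theta_{k-1}(\lambda))$, so that $F_k(s,\lambda)=s_k(\lambda)$. Write $u_k:=\frac{ds_k}{d\lambda}(1)$ and $w_k:=\frac{d\theta_k}{d\lambda}(1)$; the target is $u_{2n}$, and $u_0=0$ since $s$ is held fixed.

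First I would handle the angular recurrence. Differentiating $\theta_k(\lambda)=\lambda(\beta_{i_{k-1},i_k}-\theta_{k-1}(\lambda))$ at $\lambda=1$ and using the reflection law \eqref{angle map} $\beta_{i_{k-1},i_k}-\theta_{k-1}=\theta_k$, I obtain the linear recurrence $w_k+w_{k-1}=\theta_k$, whose solution is $w_j=(-1)^j w_0+\sum_{l=1}^{j}(-1)^{j-l}\theta_l$. Here $w_0=\left.\frac{d\theta_0}{d\lambda}\right|_{\lambda=1}$ is precisely the factor appearing in the first sum of the claimed identity.

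Next I would handle the position recurrence. Because $R_\lambda$ is trivial on $s$, the partial derivatives $\partial s_k/\partial s_{k-1}$ and $\partial s_k/\partial\theta_{k-1}$ are given by \eqref{branch derivative}, so the chain rule at $\lambda=1$ yields
\[
u_k=-\frac{\cos\theta_{k-1}}{\cos\theta_k}\,u_{k-1}-\frac{t(s_{k-1},\theta_{k-1})}{\cos\theta_k}\,w_{k-1}.
\]
Setting $v_k:=\cos\theta_k\cdot u_k$ turns this into the telescoping recurrence $v_k+v_{k-1}=-t(s_{k-1},\theta_{k-1})\,w_{k-1}$ with $v_0=0$, which solves to $v_{2n}=\sum_{j=0}^{2n-1}(-1)^{j}\,t(s_j,\theta_j)\,w_j$. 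Periodicity of the billiard orbit in $\mathcal{C}$ gives $\theta_{2n}=\theta_0$, hence
\[
\frac{\partial F_{2n}}{\partial\lambda}(s,1)=u_{2n}=\frac{1}{\cos\theta_0}\sum_{j=0}^{2n-1}(-1)^{j}\,t(s_j,\theta_j)\,w_j.
\]

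Finally, I would substitute the explicit expression for $w_j$ into this formula and swap the order of summation in the resulting double sum. The contribution of the term $(-1)^{j}w_0$ collapses (since $(-1)^{j}\cdot(-1)^{j}=1$) to $\frac{w_0}{\cos\theta_0}\sum_{i=0}^{2n-1}t(s_i,\theta_i)$, and the contribution of $\sum_{l=1}^{j}(-1)^{j-l}\theta_l$ becomes, after interchanging summation and renaming $l\to k$, exactly $\frac{1}{\cos\theta_0}\sum_{k=1}^{2n-1}(-1)^{k}\theta_k\sum_{i=k}^{2n-1}t(s_i,\theta_i)$, matching the stated formula. No step is conceptually hard; the main care is in isolating the two distinct ways $\lambda$ enters (through the initial angle $\theta_0(\lambda)$ and through each intermediate $R_\lambda$, which injects the $w_{k-1}$ factor) and in tracking signs through the telescoping.
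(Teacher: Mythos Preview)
Your argument is correct. Both you and the paper compute the same chain-rule derivative, but the organization differs: the paper augments the phase space to $(s,\theta,\lambda)$, writes $H(s,\theta,\lambda)=(R_\lambda\circ\Phi(s,\theta),\lambda)$, and multiplies out the $3\times3$ upper-triangular Jacobians $DH_k$, then splits the $\lambda$-dependence through an auxiliary function $G(s,\theta,\lambda)$ with $F_{2n}(s,\lambda)=G(s,\theta_0(\lambda),\lambda)$. You instead track the scalar recurrences for $u_k=ds_k/d\lambda$ and $w_k=d\theta_k/d\lambda$ directly; the substitution $v_k=\cos\theta_k\cdot u_k$, which turns the position recurrence into a pure telescoping sum, is the step that replaces the paper's matrix bookkeeping (their quantities $\gamma_{2n-k}$ are exactly your inner sums $\sum_{i\ge k}t_i/\cos\theta_0$ at $\lambda=1$). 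Your route is slightly more elementary and makes the two sources of $\lambda$-dependence---through $\theta_0(\lambda)$ and through each $R_\lambda$---visible from the outset, whereas the paper's matrix formulation would scale more uniformly if one needed higher-order $\lambda$-derivatives.
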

\begin{proof}
Let $H$ denote the map $(s,\theta,\lambda)\mapsto (R_\lambda\circ\Phi(s,\theta),\lambda)$. When instead of $\Phi$ we use the restriction maps $\Phi_{i,j}$, then we write $H_{i,j}$ in place of $H$. 
By \eqref{derivative} we have
$$
DH(s,\theta,\lambda)=-\begin{pmatrix}\frac{\cos\theta}{\cos\bar{\theta}}&\frac{t(s,\theta)}{\cos\bar{\theta}}&0\\0&\lambda&-\bar{\theta}\\0&0&-1\end{pmatrix}.
$$
Define $H_k:=H_{i_{k-1},i_{k}}\circ\cdots\circ H_{i_0,i_1}$, for $k=1,\ldots,2n$. Hence,
$$
D H_k(s_0,\theta_0,\lambda)=(-1)^{k}\begin{pmatrix}\alpha_k(\theta_0)&\gamma_k(s_0,\theta_0)&\sum_{j=1}^{k-1}(-1)^j\bar{\theta}_{j-1}\gamma_{k-j}(s_j,\theta_j)\\0&\lambda^k&\sum_{j=1}^k(-1)^j\lambda^{k-j}\bar{\theta}_{j-1}\\0&0&(-1)^k\end{pmatrix},
$$
where
\begin{align*}
\alpha_k(\theta_0)&:=\prod_{j=0}^{k-1}\frac{\cos\theta_j}{\cos\bar{\theta}_j},\\
\gamma_{k-j}(s_j,\theta_j)&:=\sum_{i=j}^{k-1}\lambda^{i-j} \frac{t(s_i,\theta_i)}{\cos\bar{\theta}_{i}}\frac{\cos\theta_{i+1}}{\cos\bar{\theta}_{i+1}}\cdots\frac{\cos\theta_{k-1}}{\cos\bar{\theta}_{k-1}},\quad j=0,\ldots,k-1,\\
(s_j,\theta_j)&:=R_\lambda\circ \Phi_{i_{j-1},i_j}(s_{j-1},\theta_{j-1}),\quad j=1,\ldots,k-1,\\
\bar{\theta}_j&:=\beta_{i_j,i_{j+1}}-\theta_j,\quad j=0,\ldots,k-1.
\end{align*}
Now define 
$$
G(s,\theta,\lambda):=\pi_1\circ H_{2n}(s,\theta,\lambda),$$
where $\pi_1$ is the projection $(s,\theta,\lambda)\mapsto s$. 
Using the formulas above we compute,
\begin{align*}
\frac{\partial G}{\partial \theta} (s_0,\theta_0,\lambda)&=\gamma_{2n}(s_0,\theta_0),\\
\frac{\partial G}{\partial \lambda} (s_0,\theta_0,\lambda)&=\sum_{k=1}^{2n-1}(-1)^k\bar{\theta}_{k-1}\gamma_{2n-k}(s_k,\theta_k),
\end{align*}
where 
$$
\gamma_{2n-k}(s_k,\theta_k)=\sum_{i=k}^{2n-1}\lambda^{i-k} \frac{t(s_i,\theta_i)}{\cos\bar{\theta}_{2n-1}}\rho_\lambda(\bar{\theta}_{i-1})\cdots\rho_\lambda(\bar{\theta}_{2n-2}),
$$
and $\rho_\lambda$ is the function defined in Proposition~\ref{prop-odd}.
Since $F_{2n}(s,\lambda)=G(s,\theta_0(\lambda),\lambda)$, we get
\begin{align*}
\frac{\partial F_{2n}}{\partial\lambda}(s,\lambda)&=\frac{\partial G}{\partial \theta} (s,\theta_0(\lambda),\lambda)\frac{d\theta_{0}}{d\lambda}+\frac{\partial G}{\partial \lambda} (s,\theta_0(\lambda),\lambda)\\
&=\gamma_{2n}(s,\theta_0(\lambda))\frac{d\theta_{0}}{d\lambda}+\sum_{k=1}^{2n-1}(-1)^k\bar{\theta}_{k-1}(\lambda)\gamma_{2n-k}(F_k(s,\lambda),\theta_k(\lambda)).
\end{align*}
Setting $\lambda=1$ we obtain,
$$
\frac{\partial F_{2n}}{\partial\lambda}(s,1)=\sum_{i=0}^{2n-1}\frac{t(s_i,\theta_i)}{\cos\theta_{0}}\left.\frac{d\theta_{0}}{d\lambda}\right|_{\lambda=1}+\sum_{k=1}^{2n-1}(-1)^k\theta_{k}\sum_{i=k}^{2n-1}\frac{t(s_i,\theta_i)}{\cos\theta_{0}}.
$$
\end{proof}

Let
$$
L_k(s):=\sum_{i=0}^{k-1}t(s_i,\theta_i).
$$
For convenience we set $L_0(s):=0$. Note that $L_{2n}(s)=L$ for every $s\in I_\mathcal{C}$ where $L$ denotes the length of the periodic cylinder $\mathcal{C}$.

%
Define,
$$
\Omega_0:=\frac{1}{4n}\sum_{k=0}^{2n-1}(-1)^{k+1} k(2n-k)\beta_{i_k,i_{k+1}}.
$$

\begin{theorem}\label{suf prop}
If there are $a,b\in I_\mathcal{C}$ with $a<b$ such that,
\begin{equation}\label{main suf condition}
\sum_{k=1}^{2n}(-1)^k\theta_{k}L_k(a)<\Omega_0L<\sum_{k=1}^{2n}(-1)^k\theta_{k}L_k(b),
\end{equation}
then the periodic cylinder $\mathcal{C}$ is $\lambda$-stable. Moreover, the $\lambda$-stable periodic orbit contained in $\mathcal{C}$ has base foot point $s\in(a,b)$.
\end{theorem}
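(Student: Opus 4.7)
The plan is to reduce Theorem~\ref{suf prop} to Lemma~\ref{suf condition} by rewriting the formula for $\partial F_{2n}/\partial\lambda$ from Lemma~\ref{derivative F} in the closed form
\[
\frac{\partial F_{2n}}{\partial\lambda}(s,1)=\frac{1}{\cos\theta_0}\left(\Omega_0 L-\sum_{k=1}^{2n}(-1)^k\theta_k L_k(s)\right).
\]
Once this identity is established, the hypothesis \eqref{main suf condition} says precisely that $\partial F_{2n}/\partial\lambda$ is positive at $a$ and negative at $b$, and Lemma~\ref{suf condition} then delivers both the $\lambda$-stability of $\mathcal{C}$ and the location of the base foot point in $(a,b)$.

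The first step is to evaluate $\left.\frac{d\theta_0}{d\lambda}\right|_{\lambda=1}$ from the explicit formula \eqref{theta0}. Writing $\theta_0(\lambda)=N(\lambda)/D(\lambda)$ with $D(\lambda)=\lambda^{2n}-1$ and $N(\lambda)$ the alternating sum of $\beta$'s, both $N$ and $D$ vanish at $\lambda=1$ (the vanishing of $N(1)$ is Lemma~\ref{prop-angles}), so a second-order Taylor expansion of numerator and denominator around $\lambda=1$ gives
\[
\theta_0'(1)=\frac{N''(1)D'(1)-N'(1)D''(1)}{2D'(1)^2}.
\]
A short computation, substituting $D'(1)=2n$, $D''(1)=2n(2n-1)$ and the explicit derivatives of $N$, reduces this to $\frac{1}{4n}\sum_{k=0}^{2n-1}(-1)^{k+1}k(2n-k)\beta_{i_k,i_{k+1}}=\Omega_0$, after one more use of Lemma~\ref{prop-angles} to eliminate the $(2n-1)$ cross term.

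Next I will feed this back into Lemma~\ref{derivative F}. Using $\sum_{i=0}^{2n-1}t(s_i,\theta_i)=L$ and $\sum_{i=k}^{2n-1}t(s_i,\theta_i)=L-L_k(s)$, the lemma becomes
\[
\frac{\partial F_{2n}}{\partial\lambda}(s,1)=\frac{1}{\cos\theta_0}\left(\Omega_0 L+L\sum_{k=1}^{2n-1}(-1)^k\theta_k-\sum_{k=1}^{2n-1}(-1)^k\theta_k L_k(s)\right).
\]
Since $L_{2n}(s)=L$ and $\theta_{2n}=\theta_0$, the last two terms combine into $-\sum_{k=1}^{2n}(-1)^k\theta_k L_k(s)+L\sum_{k=0}^{2n-1}(-1)^k\theta_k$. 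The key simplifying identity is therefore
\[
\sum_{k=0}^{2n-1}(-1)^k\theta_k=0,
\]
which I will prove by iterating the billiard reflection relation $\theta_{k}=\beta_{i_{k-1},i_k}-\theta_{k-1}$ to obtain the closed form $(-1)^k\theta_k=\theta_0+\sum_{j=1}^{k}(-1)^j\beta_{i_{j-1},i_j}$, summing over $k$, exchanging the double sum, and then applying Lemma~\ref{prop-angles} together with the explicit value $\hat\theta_0=\frac{1}{2n}\sum(-1)^{k+1}k\beta_{i_k,i_{k+1}}$ from Proposition~\ref{necessary condition}.

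The routine but bookkeeping-heavy part is really the second-derivative L'Hôpital and the telescoping identity for $\sum(-1)^k\theta_k$; once those are in place, the rest of the proof is a single substitution followed by an appeal to Lemma~\ref{suf condition}. I expect the arithmetic around the $\theta_0'(1)=\Omega_0$ computation to be the only place where an index-tracking mistake could creep in, and this is what I would write out most carefully.
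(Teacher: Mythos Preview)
Your proposal is correct and follows essentially the same route as the paper: compute $\theta_0'(1)=\Omega_0$, plug into Lemma~\ref{derivative F}, simplify via the identity $\sum_{k=0}^{2n-1}(-1)^k\theta_k=0$, and invoke Lemma~\ref{suf condition}. The only cosmetic difference is in how the vanishing of $\sum(-1)^k\theta_k$ is verified: you iterate the recursion $\theta_k=\beta_{i_{k-1},i_k}-\theta_{k-1}$ and sum, whereas the paper uses the cyclic-shift formula $\theta_k=\frac{1}{2n}\sum_j(-1)^{j+1}j\,\beta_{i_{k+j},i_{k+j+1}}$ and then applies Lemma~\ref{prop-angles} to the inner sum; both arguments are short and equivalent.
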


\begin{proof}
By a simple calculation we obtain,
$$
\left.\frac{d\theta_{0}}{d\lambda}\right|_{\lambda=1}=\frac{1}{4n}\sum_{k=0}^{2n-1}(-1)^{k+1} k(2n-k)\beta_{i_k,i_{k+1}}=\Omega_0.
$$
Thus we get by Lemma~\ref{derivative F},
\begin{align*}
\frac{\partial F_{2n}}{\partial\lambda}(a,1)&=\frac{1}{\cos\theta_0}\left(L\left.\frac{d\theta_{0}}{d\lambda}\right|_{\lambda=1}+\sum_{k=1}^{2n-1}(-1)^k\theta_{k}(L-L_k(a))\right)\\
&=\frac{1}{\cos\theta_0}\left(L(\Omega_0-\theta_0)+\sum_{k=0}^{2n-1}(-1)^k\theta_{k}(L-L_k(a))\right)\\
&=\frac{1}{\cos\theta_0}\left(L(\Omega_0-\theta_0)-\sum_{k=0}^{2n-1}(-1)^k\theta_{k}L_k(a)\right)\\
&=\frac{1}{\cos\theta_0}\left(L(\Omega_0-\theta_{2n})-\sum_{k=0}^{2n-1}(-1)^k\theta_{k}L_k(a)\right)\\
&=\frac{1}{\cos\theta_0}\left(L\Omega_0-\sum_{k=1}^{2n}(-1)^k\theta_{k}L_k(a)\right),
\end{align*}
where we have used the fact that 
\begin{align*}
\sum_{k=0}^{2n-1}(-1)^k\theta_k&=\sum_{k=0}^{2n-1}(-1)^k\left(\frac{1}{2n}\sum_{j=0}^{2n-1}(-1)^{j+1}j\beta_{i_{k+j},i_{k+j+1}}\right)\\
&=\frac{1}{2n}\sum_{j=0}^{2n-1}(-1)^{j+1} j\sum_{k=0}^{2n-1}(-1)^{k}\beta_{i_{k+j},i_{k+j+1}}=0,
\end{align*}
where the indices $k$ of $\beta_{i_k,i_{k+1}}$ are taken mod $2n$ and the last equality follows from Lemma~\ref{prop-angles}. Because $\cos\theta_0>0$, we have $\frac{\partial F_{2n}}{\partial\lambda}(a,1)>0$ if and only if
$$
L\Omega_0>\sum_{k=1}^{2n}(-1)^k\theta_{k}L_k(a).
$$
The computation for $\frac{\partial F_{2n}}{\partial\lambda}(b,1)$ is analogous. The theorem now follows from Lemma~\ref{suf condition}.
\end{proof}

\begin{remark}\label{l and r base foot points}
Denote by $l$ and $r$ the left and right endpoints of the interval $I_\mathcal{C}$. Then $l$ and $r$ are respectively the base foot points of the left and right generalized diagonals forming the boundary of $\mathcal{C}$. Following the proof of Theorem~\ref{suf prop} it is easy to see that $\mathcal{C}$ is $\lambda$-stable provided condition \eqref{main suf condition} holds with $a=l$ and $b=r$.  
\end{remark}

\begin{remark}\label{not lambda stable}
The map $F_{2n}(\cdot,\lambda)$ is affine and expanding by Lemma~\ref{F}. Therefore, if $\frac{\partial F_{2n}}{\partial\lambda}(\cdot,1)$ has the same sign at $l$ and $r$, i.e.,
$$
\frac{\partial F_{2n}}{\partial\lambda}(l,1)\frac{\partial F_{2n}}{\partial\lambda}(r,1)>0,
$$
then $\mathcal{C}$ cannot be $\lambda$-stable. 
\end{remark}

\section{Applications}\label{sec:results}

In this section we collect some applications of the results proved in section~\ref{nec and suf section}. We defer the proofs of the following statements to section \ref{sec:proofs}. 

A polygon $P$ is called \textit{rational} if all its angles are rational multiples of $\pi$. 
 
 \begin{theorem}\label{thm-finite}
In any rational polygon with vertices having algebraic coordinates there are at most finitely many  $\lambda$-stable periodic cylinders.
\end{theorem}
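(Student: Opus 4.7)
The plan is to reformulate $\lambda$-stability, under the rationality and algebraicity assumptions, as a strict algebraic inequality on the geometric data of the cylinder, and then use algebraic rigidity together with a growth estimate to conclude finiteness. First I would set the algebraic stage. Unfolding $P$ yields a translation surface $X$; rationality of the angles makes the directional reflection group finite, and combined with the algebraicity of the vertices this makes $X$ defined over a number field $K \subset \overline{\mathbb{Q}}$. Any periodic cylinder $\mathcal{C}$ of $\Phi$ then comes from a cylinder on $X$ whose direction, total length $L$, subsegment lengths $L_k(s)$ at algebraic $s$, and the endpoints $l, r$ of $I_\mathcal{C}$ (the foot points of the two bounding generalized diagonals, which connect vertices of $P$) all lie in $K$. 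By \eqref{theta0} and Proposition~\ref{necessary condition}, the angles $\theta_k$ and the quantity $\Omega_0$ attached to any $\lambda$-stable $\mathcal{C}$ are rational multiples of $\pi$.

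Next I would apply Remark~\ref{not lambda stable}: $\mathcal{C}$ is $\lambda$-stable only if the affine function $g(s) = L\Omega_0 - \sum_{k=1}^{2n}(-1)^k\theta_k L_k(s)$ satisfies $g(l)\,g(r) < 0$. Factoring out the common $\pi$ from $L\Omega_0$ and from each $\theta_k L_k$, this reduces to a strict algebraic inequality in $K$ whose coefficients are determined by the itinerary. For finiteness I would then combine two ingredients: (i) on $X$ each cylinder direction supports only finitely many cylinders, so it suffices to bound the set of directions supporting $\lambda$-stable cylinders; and (ii) a growth comparison showing that as the period $2n$ grows, $|L\Omega_0|$ typically dominates $|\sum (-1)^k \theta_k L_k(s)|$ uniformly on the bounded interval $I_\mathcal{C}$. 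Since the latter sum admits Abel-summation control by $L$ times a bounded constant depending on $P$, while $\Omega_0$ carries the extra weight factor $k(2n-k)$, one expects $g$ to have constant sign on $(l,r)$ for all sufficiently large $n$, ruling out $\lambda$-stability. Hence only itineraries of bounded period can contribute, and since each such itinerary determines at most one cylinder, finiteness follows.

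The chief obstacle is making the growth comparison rigorous uniformly over all itineraries. Both $\Omega_0$ (a weighted alternating sum with weights $k(2n-k)$) and $\sum (-1)^k \theta_k L_k(s)$ can exhibit substantial sign cancellations whose magnitude depends delicately on the itinerary. A clean argument likely requires organizing itineraries by the finite directional symmetry group of the billiard, exploiting discreteness of cylinder holonomies on the algebraic translation surface $X$, or applying a Siegel--Veech-type counting estimate to pin the $\lambda$-stable set down to a finite subset.
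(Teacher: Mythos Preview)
Your proposal has a genuine gap, which you yourself flag: the growth comparison in step (ii) is not established, and in fact the heuristic is suspect. The quantity $\Omega_0$ is an \emph{alternating} weighted sum of bounded angles $\beta_{i_k,i_{k+1}}$, so despite the weights $k(2n-k)$ there is no a priori lower bound of order $n$ on $|\Omega_0|$; for many itineraries (e.g.\ those with constant $\beta$) one gets $\Omega_0 = O(1)$. Meanwhile $\sum_k (-1)^k \theta_k L_k(s)$ involves $2n$ terms each of size up to $(\pi/2)L$, and Abel summation does not obviously beat this down uniformly in the itinerary. So the claim that $g$ eventually has constant sign on $(l,r)$ is unsupported, and without it your argument does not terminate.

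The paper's proof avoids this quantitative difficulty entirely by using only the \emph{necessary} condition of Proposition~\ref{necessary condition}, not the sufficient one. The key observation is that for a $\lambda$-stable cylinder the angle of departure equals $\tan(p\pi/q)$ for some rational $p/q$ (since the $\beta_{i,j}$ are rational multiples of $\pi$), while on the other hand any generalized diagonal bounding the cylinder is an integer combination of finitely many vectors with algebraic coordinates, so its slope is algebraic of degree bounded by some $N=N(P)$. Niven's theorem then forces $\varphi(q)\le 2N$, hence $q$ lies in a finite set, hence only finitely many directions can occur. Since in a rational polygon each direction class contains only finitely many generalized diagonals (they start at corners), one gets finitely many $\lambda$-stable cylinders. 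This argument is short, uses nothing about $F_{2n}$ or Remark~\ref{not lambda stable}, and sidesteps the uniformity problem altogether.
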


The applicability of this theorem is strongly related to the results of \cite{C-S}, but we give self contained simple proofs.
 Theorem \ref{thm-finite} applies to regular polygons since we can write the vertices of any regular polygon as $(\cos(2\pi k/d),\sin(2\pi k/d))$, with $k=0,\ldots,d-1$. Moreover, Theorem~\ref{thm-finite} does not apply to all rational polygons since not every rational polygon has algebraic vertices. In fact, there are rational quadrilaterals having transcendental vertices, e.g., in rectangles of
height 1, most widths are not algebraic.
However, as we show in the following corollary, all rational triangles have algebraic vertices. 
 \begin{corollary}\label{corollary triangles}
In any rational triangle there are at most finitely many  $\lambda$-stable periodic cylinders.
\end{corollary}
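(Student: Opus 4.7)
The plan is to reduce to Theorem~\ref{thm-finite} by showing that every rational triangle is similar to a rational triangle whose vertices are algebraic. Since $\lambda$-stability is a property of the billiard dynamics only up to similarity, this reduction suffices.

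First, I would verify that $\lambda$-stability is invariant under similarity transformations of the plane. For an isometry $T$, the billiard map in $T(P)$ is smoothly conjugate to the billiard map in $P$ via the natural bundle map on $M$, and this conjugacy commutes with $R_\lambda$ (since $R_\lambda$ acts only in the $\theta$-coordinate, which is unchanged under isometries). For a homothety of ratio $c>0$, the arc-length coordinate $s$ rescales by $c$, the angle coordinate $\theta$ is unchanged, and again the conjugacy commutes with $R_\lambda$. Hence periodic orbits, itineraries, and convergence as $\lambda \to 1$ are all preserved, so a periodic cylinder is $\lambda$-stable in $P$ if and only if its image is $\lambda$-stable in the similar copy.

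Next, given a rational triangle with angles $\alpha, \beta, \gamma$, each a rational multiple of $\pi$, I would place one vertex at the origin and another at $(1,0)$. The third vertex then lies at the intersection of the ray from the origin making angle $\alpha$ with the positive $x$-axis and the ray from $(1,0)$ making angle $\pi-\beta$. A direct computation (or the law of sines) gives the third vertex as
\begin{equation*}
\left(\frac{\sin\beta\cos\alpha}{\sin\gamma},\ \frac{\sin\beta\sin\alpha}{\sin\gamma}\right).
\end{equation*}
Since $\alpha,\beta,\gamma \in \pi\mathbb{Q}$, the quantities $e^{i\alpha}, e^{i\beta}, e^{i\gamma}$ are roots of unity, and therefore $\cos\alpha, \sin\alpha, \sin\beta, \sin\gamma$ all lie in the algebraic closure of $\mathbb{Q}$. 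Thus the third vertex has algebraic coordinates, and together with $(0,0)$ and $(1,0)$, the resulting triangle has algebraic vertices.

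Combining the two steps, every rational triangle is similar to a rational triangle with algebraic vertices; applying Theorem~\ref{thm-finite} to this normalized triangle and transporting the conclusion back through the similarity yields the corollary. The only subtle point, which I would treat briefly but carefully, is the similarity-invariance of $\lambda$-stability; the rest is a direct computation using the algebraicity of sines and cosines of rational multiples of $\pi$.
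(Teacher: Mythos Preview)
Your proposal is correct and follows essentially the same approach as the paper: normalize the triangle so that two vertices are $(0,0)$ and $(1,0)$, show the third vertex has algebraic coordinates because sines and cosines of rational multiples of $\pi$ are algebraic, and apply Theorem~\ref{thm-finite}. The only differences are cosmetic: the paper parametrizes the third vertex as $(a,a\tan\alpha)$ and solves an implicit equation in $a$ coming from $\cos(2\beta)$, whereas you write down the third vertex directly via the law of sines; and you make the similarity-invariance of $\lambda$-stability explicit, while the paper leaves it tacit.
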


\begin{proof}
Consider the triangle having vertices $(0,0)$, $(1,0)$ and $p_a:=(a,a\tan\alpha)$ where $a>0$ and $0<\alpha<\pi/2$. The triangle has angle $\alpha$ at $(0,0)$. Denote by $\beta$ the angle at $p_a$. Then
\begin{equation}\label{eq:cos2beta}
\cos(2\beta)=1-\frac{2\sin^2\alpha}{(a-1)^2+a^2\tan^2\alpha}.
\end{equation}
So, if both $\alpha$ and $\beta$ are rational multiple of $\pi$, then $a$ is algebraic because $\cos(2\beta)$, $\sin\alpha$ and $\tan\alpha$ are algebraic. Thus, all rational triangles have algebraic vertices and the conclusion follows from Theorem~\ref{thm-finite}.
\end{proof}
\begin{remark}
Not every triangle with algebraic vertices is rational. Indeed, let $\alpha=\pi/4$ and $a=\frac{\sqrt{n-1}+\sqrt{n}-1}{2 \left(\sqrt{n}-1\right)}$ with odd $n\geq3$ 
, which is the positive root of the polynomial 
$$
q(x)=\left(1-\frac{1}{\sqrt{n}}\right)\left((x-1)^2+x^2\right)-1.
$$
By \eqref{eq:cos2beta}, and our choice of $\alpha$ and $a$ we get $\cos(2\beta)=\frac{1}{\sqrt{n}}$. However, by \cite[Theorem 3 of Chapter 8]{A-Z}, we know that $\frac{1}{2\pi}\arccos\frac{1}{\sqrt{n}}$ is irrational. Hence, $\beta$ is not a rational multiple of $\pi$.  Thus, the triangle with algebraic vertices $(0,0)$, $(1,0)$ and $(a,a\tan\alpha)$ is not rational.
\end{remark}


Is there a polygon with an infinite number of $\lambda$-stable periodic cylinders? Having no $\lambda$-stable periodic cylinders at all? At the time of writing, we do not know the answer to these questions. 

However, we know from Theorem~\ref{thm-finite} that a dense set of polygons have at most a finite number of $\lambda$-stable periodic cylinders. The space of $d$-gons can be considered as a subset of the Euclidean space $\Rr^{2d}$, thus inheriting its topology. 

\begin{corollary}
The set of $d$-gons having at most finite number of $\lambda$-stable periodic cylinders is dense in the space of $d$-gons. 
\end{corollary}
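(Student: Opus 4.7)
The plan is to reduce the corollary to Theorem~\ref{thm-finite} by showing that the set of rational $d$-gons with algebraic vertex coordinates is dense in the space of $d$-gons, regarded as an open subset of $\Rr^{2d}$. Granted this density, every $d$-gon is a limit of such polygons, each of which has only finitely many $\lambda$-stable periodic cylinders by Theorem~\ref{thm-finite}.

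To set up the approximation, I would parameterize a $d$-gon by the base vertex $v_1\in\Rr^2$, the direction $\theta_1$ of the outgoing edge from $v_1$, the interior angles $\alpha_2,\ldots,\alpha_d$ at the remaining vertices, and the side lengths $\ell_1,\ldots,\ell_d$. The angle $\alpha_1$ is then determined by $\sum_{i=1}^d\alpha_i=(d-2)\pi$, the edge directions by the turning rule $\theta_{i+1}=\theta_i+\pi-\alpha_{i+1}$, and the vertices by $v_{i+1}=v_i+\ell_i(\cos\theta_i,\sin\theta_i)$. The only remaining constraint is closure: $\sum_{i=1}^d\ell_i(\cos\theta_i,\sin\theta_i)=0$.

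Given $P$ and $\varepsilon>0$, I would construct an approximant $P'$ as follows. Choose algebraic $v_1'\approx v_1$, a rational multiple of $\pi$ for $\theta_1'\approx\theta_1$, and rational multiples of $\pi$ for $\alpha_i'\approx\alpha_i$ for $i=2,\ldots,d$; then $\alpha_1'$ is automatically a rational multiple of $\pi$ close to $\alpha_1$, and every $\theta_i'$ is a rational multiple of $\pi$, so $\cos\theta_i'$ and $\sin\theta_i'$ are algebraic (being real and imaginary parts of roots of unity). Now pick algebraic lengths $\ell_i'\approx\ell_i$ freely for $i=1,\ldots,d-2$, and solve the closure equation for the remaining two lengths $\ell_{d-1}',\ell_d'$; this is a $2\times 2$ linear system with algebraic coefficients and determinant $\sin(\theta_d'-\theta_{d-1}')=\sin\alpha_d'\ne 0$, so it has unique algebraic solutions close to the originals. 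The vertices $v_i'=v_1'+\sum_{j<i}\ell_j'(\cos\theta_j',\sin\theta_j')$ are then algebraic and close to $v_i$.

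The main obstacle is ensuring that $P'$ is still a genuine $d$-gon of the same combinatorial type as $P$, and that the $2\times 2$ closure system remains nonsingular after perturbation. The nonsingularity rests on $\alpha_d\notin\{0,\pi\}$, an open condition holding at any non-degenerate $P$. Simplicity of $P'$ and nonvanishing of $\sin\alpha_d'$ are open conditions in $\Rr^{2d}$ and hence persist for sufficiently small perturbations. This yields the required density, and the corollary follows.
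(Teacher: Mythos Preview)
Your proof is correct and takes a genuinely different route from the paper's. The paper approximates by triangulating $P$, approximating one triangle by a rational triangle with algebraic vertices (using the computation in Corollary~\ref{corollary triangles}), and then propagating through the triangulation by adjusting one new vertex at a time. Your approach instead parameterizes the polygon globally by a base point, an initial edge direction, the interior angles, and the side lengths, perturbs the first two and the angles to rational multiples of $\pi$ and the first $d-2$ lengths to algebraic numbers, and then recovers the last two lengths from the closure equation via a $2\times 2$ linear system with algebraic entries and nonzero determinant $\sin\alpha_d'$.

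Both arguments yield a rational polygon with algebraic vertices arbitrarily close to $P$, after which Theorem~\ref{thm-finite} applies. Your method is more direct and avoids the triangulation scaffolding and the separate lemma on rational triangles; the paper's method is more geometric and reuses the triangle corollary as a building block. One minor point worth stating explicitly in your version is that the solutions $\ell_{d-1}',\ell_d'$ remain strictly positive by continuity (since they are close to $\ell_{d-1},\ell_d>0$), so $P'$ really is a polygon and not a degenerate figure; you implicitly cover this under ``open conditions persist,'' but it is the one place where positivity, not just non-degeneracy of the linear system, is needed.
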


\begin{proof}
It is well known that rational polygons are dense in the space of $d$-gons. By the proof of Corollary~\ref{corollary triangles}, we know that rational triangles have algebraic vertices, thus form a dense subset of the set of all triangles. Now given a polygon $P$ consider any triangulation of it, i.e., break $P$ into a union of triangles with non-intersecting interior and vertices belonging to the set of vertices of $P$. Such triangulation can be obtained in many ways, e.g., the ear clipping method.

Now pick a triangle $\Delta_0$ in the triangulation. We can approximate it by another triangle $\Delta_0'$ having rational angles and algebraic vertices. Next, pick a triangle $\Delta_1$ adjacent to $\Delta_0'$ and denote by $v_1$ the vertex of $\Delta_1$ which is not a vertex of $\Delta_0'$. We can move $v_1$ around and $\Delta_0'$ does not change. Using the construction in the proof of Corollary~\ref{corollary triangles} we can perturb slightly $v_1$ obtaining a rational triangle $\Delta_1'$ with algebraic vertices and adjacent to $\Delta_0'$.  Now, repeat this procedure until exhausting the list of triangles forming the triangulation of $P$. At the end, we have perturbed  the triangulation of $P$ to obtain an arbitrary close polygon $P'$ whose triangulation consists of rational triangles having algebraic vertices. Thus $P'$ is also rational and has algebraic vertices. Using Theorem~\ref{thm-finite}, this shows that in any neighbourhood of $P$ we can find $P'$ having at most finite number of $\lambda$-stable periodic cylinders. 
\end{proof}

A {\it (generalized) Fagnano\footnote{Giovanni Francesco Fagnano, was an italian mathematician that lived between the years 1715 -- 1797.} orbit} is a periodic orbit with period $p$ of the billiard in $P$ which has itinerary $i_{k+1}= i_k + m\pmod{d}$ for $k=0,\ldots,p-1$ and some integer $1\leq m< d$. Every Fagnano orbit extends to a periodic cylinder called the \textit{Fagnano cylinder}. 


\begin{proposition}[Fagnano]\label{regular}
If $P$ is a regular polygon, then every Fagnano cylinder is $\lambda$-stable. Moreover, the periodic orbit in the center of the cylinder is $\lambda$-stable.
\end{proposition}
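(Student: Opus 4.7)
My plan is to apply Theorem~\ref{suf prop} at a distinguished orbit of the Fagnano cylinder, which I will identify as the \emph{central orbit}. Two cases are immediate: if the period $p=d/\gcd(d,m)$ is odd, Proposition~\ref{prop-odd} settles it, and if $m=d/2$ (so $p=2$) the Fagnano orbit is a ping-pong orbit, hence $\lambda$-stable. I focus on the main case $p=2n>2$.

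By the rotational symmetry of the regular $d$-gon, $\beta_{i_k,i_{k+1}}$ takes the same value $\beta:=\pi(d-2m)/d$ for every $k$, so by Proposition~\ref{necessary condition} the angle along the cylinder is constant: $\hat\theta_k=\beta/2$ for all $k$, with $\beta\in(-\pi,\pi)\setminus\{0\}$. I define the \emph{central orbit} as the unique orbit in $\mathcal{C}$ whose polygonal path in $P$ is invariant under the rotation of $P$ by $2\pi m/d$; existence and uniqueness follow by noting that the induced action of this rotation on the interval $I_\mathcal{C}$ is an affine map of finite order, necessarily an orientation-reversing involution, whose single fixed point is $s_c$. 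Since this rotation cyclically permutes the $2n$ links of the central orbit, all these links have the common length $t_i(s_c)=L/(2n)$, where $L$ is the length of any orbit in $\mathcal{C}$.

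With $\theta_k\equiv\beta/2$ and $t_i(s_c)\equiv L/(2n)$, the elementary identities $\sum_{k=0}^{2n-1}(-1)^{k+1}k(2n-k)=\sum_{k=1}^{2n}(-1)^k k=n$ yield $\Omega_0=\beta/4$ and $\sum_{k=1}^{2n}(-1)^k\theta_k L_k(s_c)=\beta L/4=\Omega_0 L$. Thus equality holds in~\eqref{main suf condition} at $s=s_c$, which by Lemma~\ref{derivative F} is precisely the statement $\frac{\partial F_{2n}}{\partial\lambda}(s_c,1)=0$.

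To conclude via Lemma~\ref{suf condition}, I still need this affine function of $s$ to be non-constant. Writing $F_{2n}(s,\lambda)=A(\lambda)s+B(\lambda)$ as in Lemma~\ref{F}, the slope equals $A'(1)$; differentiating the product $A(\lambda)=\prod_j\rho_\lambda(\bar\theta_j(\lambda))$ at $\lambda=1$ and using $\bar\theta_j(1)=\beta/2$ yields $A'(1)=-n\beta\tan(\beta/2)\neq 0$, since $\beta/2\in(-\pi/2,\pi/2)\setminus\{0\}$. Hence $\frac{\partial F_{2n}}{\partial\lambda}(\cdot,1)$ changes sign at $s_c$; choosing $a<s_c<b$ in $I_\mathcal{C}$ close enough to $s_c$ supplies the hypothesis of Lemma~\ref{suf condition}, and letting $a,b\to s_c$ identifies the base foot point of the resulting $\lambda$-stable orbit as $s_c$ itself. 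Proposition~\ref{prop-unique} then shows that the central orbit is the unique $\lambda$-stable orbit of $\mathcal{C}$. The most delicate step is the symmetry analysis giving the central orbit and the equal-length property of its links; by Corollary~\ref{C and C minus} one may also assume $0<m<d/2$, in which case $\beta\in(0,\pi)$, simplifying sign-checks.
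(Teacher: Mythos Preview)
Your proof is correct and reaches the same conclusion as the paper, but via a genuinely different route at the key step.

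Both arguments dispatch odd period and ping-pong cases the same way, and both compute $\beta_{i_k,i_{k+1}}\equiv\beta$, $\hat\theta_k\equiv\beta/2$, and $\Omega_0=\beta/4$. The divergence is in how $\lambda$-stability is established. The paper works at the \emph{boundary} of the cylinder: it uses the geometric identities $L_{k+1}(l)-L_k(l)=L_1(r)$ and $L_{k+1}(r)-L_k(r)=L_1(l)$ (for odd $k$) to evaluate $\sum_k(-1)^k\theta_kL_k$ at the two generalized diagonals, obtaining $\tfrac{\pi}{2}(1-2m/d)\,nL_1(r)$ and $\tfrac{\pi}{2}(1-2m/d)\,nL_1(l)$; the strict inequalities $2nL_1(r)<L<2nL_1(l)$ then feed directly into Theorem~\ref{suf prop} via Remark~\ref{l and r base foot points}. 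Only afterwards does the paper turn to the midpoint and use an $\varepsilon$-argument, implicitly relying on the boundary calculation to know the affine function is non-constant. You instead work entirely at the \emph{center}: you show $\partial_\lambda F_{2n}(s_c,1)=0$ and then compute the slope $A'(1)=-n\beta\tan(\beta/2)\neq 0$ by differentiating $\prod_j\rho_\lambda(\bar\theta_j(\lambda))$ directly. This bypasses the boundary-diagonal identities altogether and makes the ``center orbit is the $\lambda$-stable one'' conclusion immediate rather than a second step.

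Two small remarks on your write-up. First, your symmetry argument asserts that the induced rotation on $I_\mathcal{C}$ is ``necessarily an orientation-reversing involution''; finite order alone does not exclude the identity, so you should note (as is easy) that one billiard reflection reverses the $s$-orientation while the isometry carrying side $i_1$ back to $i_0$ preserves it, forcing $\tilde\rho'(s)=-1$. Second, Lemma~\ref{suf condition} requires $\partial_\lambda F_{2n}(a,1)>0>\partial_\lambda F_{2n}(b,1)$ in that order, so you need $A'(1)<0$ rather than just $\neq 0$; this follows since $\beta\tan(\beta/2)>0$ for all $\beta\in(-\pi,\pi)\setminus\{0\}$, so your reduction to $m<d/2$ is not actually needed here.
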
 

In the following we study the $\lambda$-stable periodic cylinders in integrable polygons, i.e., the rectangles, the equilateral triangle and the right triangles 45-45-90 and 30-60-90 degrees. 
Except for the rectangle we determine exactly the number of $\lambda$-stable periodic cylinders and show that in all cases there exists at least one and at most a finite number of $\lambda$-stable periodic cylinders. For integrable triangles the finiteness of $\lambda$-stable periodic cylinders follows from Corollary~\ref{corollary triangles}.

\begin{proposition}[Rectangle]\label{prop:rectangle} Every rectangle has two ping-pong cylinders, they are $\lambda$-stable.
These are the only $\lambda$-stable periodic cylinders unless the aspect ratio $w$  of the rectangle
satisfies 
\begin{equation}\label{aspect ratio formula}
w = \frac{p}{q} \cot \left ( \frac{\pi p}{2(p + q)} \right )
\end{equation} 
with $p$ and $q$ positive integers. 
In addition to the ping-pong cylinders, when $w$ satisfies \eqref{aspect ratio formula}, there are at most two more $\lambda$-stable periodic cylinders. The slope of the cylinders is $\pm p/(qw)$. 
\end{proposition}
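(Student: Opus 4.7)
The plan is to handle the ping-pong cylinders directly, then use Proposition \ref{necessary condition} to constrain every other candidate $\lambda$-stable cylinder, and finally match the resulting combinatorial angle of departure against the actual geometric one. A rectangle has exactly two pairs of parallel sides, hence exactly two ping-pong cylinders; by the remark preceding Proposition \ref{prop-unique}, both are $\lambda$-stable.

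Now let $\mathcal{C}$ be a non-ping-pong periodic cylinder of the rectangle $[0,w]\times[0,1]$. Unfolding to the flat torus $\Rr^2/(2w\mathbb{Z}\times 2\mathbb{Z})$, $\mathcal{C}$ corresponds to a closed geodesic of rational slope $\pm p/(qw)$ for coprime positive integers $p,q$, and has period $2(p+q)$ with $2p$ reflections on the horizontal sides and $2q$ on the vertical ones. For each sign of the slope the parallel family of orbits forms a single cylinder, so there are at most two candidate cylinders per pair $(p,q)$. The key step is the combinatorial evaluation of $\hat{\theta}_0$ from Proposition \ref{necessary condition}. Labelling the sides $1,2,3,4$ (bottom, right, top, left) anti-clockwise, one checks directly that $\beta_{i,j}=0$ for opposite sides and $\beta_{i,j}=\pm\pi/2$ for adjacent sides, the sign being dictated by whether the transition $i\to j$ turns clockwise or anti-clockwise inside the rectangle. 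Inserting the itinerary of a $(p,q)$-orbit into
\[
\hat{\theta}_0=\frac{1}{2(p+q)}\sum_{k=0}^{2(p+q)-1}(-1)^{k+1}k\,\beta_{i_k,i_{k+1}},
\]
the alternating weighted sum telescopes and yields $\hat{\theta}_0=\pi p/(2(p+q))$ when the itinerary starts at a vertical side, or $\hat{\theta}_0=\pi q/(2(p+q))$ when it starts at a horizontal one.

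On the geometric side, the actual angle of departure of a $(p,q)$-orbit, measured from the inward normal of the starting side, equals $\arctan(p/(qw))$ (vertical start) or $\arctan(qw/p)$ (horizontal start). Equating the combinatorial and geometric values forces $\arctan(p/(qw))=\pi p/(2(p+q))$, which rearranges exactly to \eqref{aspect ratio formula}. Consequently, if $w$ fails \eqref{aspect ratio formula} for every coprime pair $(p,q)$, then no non-ping-pong cylinder can be $\lambda$-stable and only the two ping-pong cylinders survive. If $w$ does satisfy \eqref{aspect ratio formula} for some $(p,q)$, then aside from the ping-pong cylinders the only remaining candidates are the (at most) two cylinders of type $(p,q)$, whose slopes are $\pm p/(qw)$ as claimed.

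The main obstacle is the combinatorial evaluation of $\hat{\theta}_0$, since the $\pm$ signs of the $\pi/2$ contributions along a $(p,q)$-itinerary require careful bookkeeping. A clean way to organise the argument is to parametrise the velocity as $(\varepsilon_x\cos\alpha,\varepsilon_y\sin\alpha)$ with $\varepsilon_x,\varepsilon_y\in\{\pm 1\}$ flipping after each vertical, respectively horizontal, bounce; the pattern in which the $\varepsilon$'s toggle along the orbit depends only on $p$ and $q$, and summing the resulting telescoping series produces the advertised value. This is the most delicate calculation of the proof; everything else is either a direct application of Proposition \ref{necessary condition} or an elementary trigonometric manipulation.
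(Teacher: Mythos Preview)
Your overall strategy is exactly the paper's: invoke Proposition~\ref{necessary condition} for every non-ping-pong cylinder, compute the combinatorial angle of departure from the itinerary, and match it against the geometric angle $\arctan(p/(qw))$ to obtain~\eqref{aspect ratio formula}. All the surrounding pieces (ping-pong cylinders, unfolding to the torus, period $2(p+q)$, at most two cylinders per $(p,q)$) are correctly identified.

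The gap is that you do not actually perform the central computation; you assert that the alternating weighted sum ``telescopes'' to $\pi p/(2(p+q))$ and describe a bookkeeping scheme with the sign pair $(\varepsilon_x,\varepsilon_y)$, but nothing is carried out. This is precisely where the paper does real work, and its method is different from what you suggest: it decomposes the angle sequence $(\beta_{i_k,i_{k+1}})$ into blocks, each consisting of a run of opposite-side hits (all $\beta=0$) followed by an even run of adjacent-side hits (all $\beta=\pm\pi/2$ with a fixed sign within the run). A parity argument using the block structure shows that the $j$-th block contributes $-m_j\pi/4$ to the sum, independently of its position, and since the total number of adjacent-side transitions is $\sum_j m_j=4p$ one obtains $\hat\theta_0=-\pi p/(2(p+q))$. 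There is no telescoping in the usual sense; the cancellation comes from the block-by-block parity analysis. Your $(\varepsilon_x,\varepsilon_y)$ idea may well lead to an alternative derivation, but as written it is only a hint, and the claim that the sum telescopes is not substantiated. Since you yourself flag this as ``the most delicate calculation of the proof,'' the proposal is a correct outline that stops short of the one nontrivial step.
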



This proposition generalizes immediately to polygons which are tiled by translated copies of a single rectangle. In the case of the square we get the following result.

\begin{corollary}[Square]\label{prop-square}
The only $\lambda$-stable periodic cylinders in the square are the two ping-pond cylinders and the two Fagnano cylinders (having period four). 
\end{corollary}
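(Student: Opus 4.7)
The strategy is to apply Proposition~\ref{prop:rectangle} with $w=1$ and identify the only additional $\lambda$-stable cylinders explicitly. The two ping-pong cylinders are $\lambda$-stable by the first part of Proposition~\ref{prop:rectangle}, so I only need to classify the rest. Any further $\lambda$-stable periodic cylinder in the square requires the existence of positive integers $p,q$ with $1=(p/q)\cot(\pi p/(2(p+q)))$, equivalently $\tan\alpha = p/q$ where $\alpha := \pi p/(2(p+q))\in (0,\pi/2)$.

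The key reduction is to show that this forces $p=q$. Since $\alpha$ is a rational multiple of $\pi$, so is $2\alpha$, and using the identity $\cos(2\alpha)=(1-\tan^2\alpha)/(1+\tan^2\alpha)$ we obtain
\[
\cos(2\alpha) \;=\; \frac{q^2-p^2}{q^2+p^2} \;\in\; \mathbb{Q}.
\]
By Niven's theorem (a direct consequence of the rationality criterion [A-Z, Chapter~8, Theorem~3] already invoked earlier in the paper), whenever $2\alpha/\pi$ is rational and $\cos(2\alpha)$ is rational, one must have $2\cos(2\alpha)\in\{0,\pm 1,\pm 2\}$. The constraint $2\alpha\in (0,\pi)$ excludes $2\cos(2\alpha)=\pm 2$, leaving $\cos(2\alpha)\in \{0,\pm 1/2\}$. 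The values $\pm 1/2$ would force $q^2=3p^2$ or $p^2=3q^2$, impossible in positive integers by the irrationality of $\sqrt{3}$. Hence $\cos(2\alpha)=0$, which forces $p=q$.

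With $p=q$, the slope of the additional cylinder(s) furnished by Proposition~\ref{prop:rectangle} is $\pm p/(qw)=\pm 1$, which is precisely the slope of the two Fagnano cylinders of the square. Since the square is regular, Proposition~\ref{regular} guarantees that both Fagnano cylinders are indeed $\lambda$-stable, and Proposition~\ref{prop:rectangle} caps the count at two additional cylinders. The only nontrivial step is the Niven-type argument eliminating the aspect-ratio condition for $p\neq q$; everything else is a direct specialization of results already established.
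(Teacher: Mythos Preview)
Your proof is correct and follows essentially the same route as the paper: specialize Proposition~\ref{prop:rectangle} to $w=1$, use a Niven-type rationality result to force the slope to be $\pm 1$, and then invoke Proposition~\ref{regular} for the Fagnano cylinders. The only cosmetic difference is that the paper cites directly the tangent version of Niven's theorem (the only rational values of $\tan$ at rational multiples of $\pi$ are $0,\pm 1$), whereas you pass through $\cos(2\alpha)=(q^2-p^2)/(q^2+p^2)$ and apply the cosine version; the two arguments are equivalent.
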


The following result shows that $\lambda$-stability and the notation of stability introduced by Galperin, Stepin and Vorobets are unrelated. 

\begin{corollary}\label{GSV-stability} 
There are  cylinders of periodic orbits which are 
\begin{enumerate}
\item stable and $\lambda$-stable
\item stable and not $\lambda$-stable,
\item unstable and $\lambda$-stable.
\item unstable and $\lambda$-unstable.
\end{enumerate}
\end{corollary}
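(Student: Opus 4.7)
The plan is to exhibit a concrete example in each of the four cases, using the results already proved in the paper together with the GSV criterion. Case~(1) follows immediately from the corollary preceding the statement: the Fagnano cylinder of the equilateral triangle has underlying period $3$ (odd), so by Proposition~\ref{prop-odd} it is $\lambda$-stable, and by GSV's doubled-itinerary argument it is also stable.

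For case~(3), any ping-pong cylinder is $\lambda$-stable tautologically (the cylinder is preserved for every value of $\lambda$), but the alternating sum $i_0 - i_1$ of its length-two itinerary is nonzero, and geometrically the cylinder is destroyed by any small rotation of one of the two parallel sides. A second family of examples is provided by Proposition~\ref{regular}: the Fagnano cylinder of a regular polygon with an even number of sides is $\lambda$-stable, yet its itinerary $1, 2, \ldots, d$ fails the alternating-sum condition.

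For case~(4), I would fix a rectangle whose aspect ratio $w$ does not satisfy \eqref{aspect ratio formula} for any positive integers $p$, $q$. By Proposition~\ref{prop:rectangle} the only $\lambda$-stable cylinders are the two ping-pong cylinders, so every other periodic cylinder is not $\lambda$-stable; picking the Fagnano cylinder of the rectangle (slope $\pm 1/w$, itinerary $1,2,3,4$, alternating sum $-2$) gives a cylinder which is simultaneously GSV-unstable and $\lambda$-unstable.

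The main obstacle is case~(2). The approach is to exhibit a rational polygon with algebraic vertices (so that Theorem~\ref{thm-finite} yields only finitely many $\lambda$-stable cylinders) that carries a periodic cylinder whose itinerary has vanishing alternating sum yet fails the sufficient $\lambda$-stability criterion of Theorem~\ref{suf prop}. A candidate is a period-$4$ cylinder with itinerary $1, 2, 3, 2$ in a suitable isosceles triangle, whose alternating sum is $1 - 2 + 3 - 2 = 0$ and whose apex angle can be chosen so that the itinerary is realised by an actual billiard orbit; an alternative is the period-$4$ orbit $1, 2, 4, 3$ in a regular pentagon, whose alternating sum is $1 - 2 + 4 - 3 = 0$. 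The hard step is to verify realisability and then to compute $\partial F_{4}/\partial\lambda(\cdot,1)$ at the two endpoints $l, r$ of the base of the cylinder using Lemma~\ref{derivative F}: if the two values have the same sign then Remark~\ref{not lambda stable} implies the cylinder is not $\lambda$-stable, which completes the example.
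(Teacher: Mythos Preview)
Your arguments for cases~(1), (3), and (4) are correct and close in spirit to the paper's, though you use slightly different examples. The paper treats (3) and (4) simultaneously using the square: every periodic orbit in the square is GSV-unstable (\cite[Theorem~1]{GSV}), while Corollary~\ref{prop-square} tells us exactly which ones are $\lambda$-stable, giving both cases at once.

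The genuine gap is in case~(2). You yourself flag the problem: you have not verified that your candidate cylinder (whether $1,2,3,2$ in an isosceles triangle or $1,2,4,3$ in the pentagon) is actually realised, nor have you carried out the derivative computation from Remark~\ref{not lambda stable}. Worse, this computation is delicate---note that the period-4 perpendicular cylinder $\{1,2,1,3\}$ in the equilateral triangle, which has the same alternating-sum structure as your isosceles candidate, turns out to be $\lambda$-stable (Proposition~\ref{prop-equilateral}). So there is no reason to expect your specific candidates to work without doing the full calculation.

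The paper avoids all of this with a cardinality argument. By \cite[Assertion~9 and Theorem~3]{GSV}, the equilateral triangle has \emph{infinitely} many GSV-stable periodic orbits. By Corollary~\ref{corollary triangles} (which follows from Theorem~\ref{thm-finite}), it has only \emph{finitely} many $\lambda$-stable cylinders. Hence some GSV-stable cylinder must fail to be $\lambda$-stable, and case~(2) follows without identifying any particular example or computing any derivative. This is both shorter and avoids the risk that a hand-picked candidate accidentally satisfies the $\lambda$-stability criterion.
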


\begin{proof}
Every odd length periodic orbit is stable (\cite[p.28]{GSV}) and $\lambda$-stable.
Every periodic orbit in the square is unstable (\cite[Theorem 1]{GSV}) while only the ping-pong orbits 
and two period four orbit are $\lambda$-stable (Corollary~\ref{prop-square}).
There are infinitely many stable periodic orbits in the equilateral triangle (\cite[Assertion 9 and Theorem 3]{GSV})
but only finitely many $\lambda$-stable orbits (Corollary~\ref{corollary triangles}).

\end{proof}

Regarding the first item of the previous corollary, the following natural question arises: is there an even length periodic orbit which is both stable and $\lambda$-stable?

Now we turn to triangles. The following result concerns the equilateral triangle. A cylinder of periodic orbits is called \textit{perpendicular} if the angle of departure of the periodic orbits is zero. Such cylinders of periodic orbits must have even period.

\begin{proposition}[Equilateral triangle]\label{prop-equilateral}
In the equilateral triangle only the two Fagnano cylinders and the two perpendicular periodic cylinders of period four are $\lambda$-stable.
\end{proposition}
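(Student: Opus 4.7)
The plan is to establish $\lambda$-stability of the listed cylinders first and then to exclude all others.

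For the Fagnano cylinders the period is three, so Proposition~\ref{prop-odd} gives $\lambda$-stability of every orbit of the cylinder. For the two perpendicular period-four cylinders I would apply Lemma~\ref{suf condition} directly. Such a cylinder has itinerary of the form $(i_0, i_1, i_2, i_1)$ and angle sequence of the form $(0, \pi/3, \pi/3, 0)$, so the geometric data $\Omega_0, \theta_k, L_k(s)$ entering $\partial F_{4}/\partial\lambda(s,1)$ through Lemma~\ref{derivative F} are all explicit in the position of the base foot point $s \in I_\mathcal{C}$. One then checks that $\partial F_{4}/\partial\lambda(l,1)$ and $\partial F_{4}/\partial\lambda(r,1)$ have opposite signs at the two endpoints of $I_\mathcal{C}$, and Lemma~\ref{suf condition} concludes.

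To rule out the remaining cylinders, I would use the standard unfolding of the equilateral triangle: reflected copies tile a fundamental domain of a triangular lattice in the plane, and every periodic cylinder of the billiard corresponds to a primitive lattice direction $(p, q)$. The geometric angle of departure $\hat{\theta}_0$ is then $\arctan$ of a rational combination of $p, q$ and $\sqrt{3}$, while Proposition~\ref{necessary condition}, combined with the fact that every $\beta_{i_k, i_{k+1}}$ lies in $\{\pi/3, -\pi/3\}$, forces $\hat{\theta}_0$ to be a rational multiple of $\pi/3$. By Niven's theorem on rational values of the tangent at rational multiples of $\pi$, only the values $\hat{\theta}_0 \in \{0, \pm\pi/6\}$ are compatible with both expressions; the former singles out the perpendicular orbits and the latter singles out the Fagnano orbits.

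This reduces the task to ruling out all perpendicular cylinders of period exceeding four. For these I would invoke Remark~\ref{not lambda stable}: evaluate $\partial F_{2n}/\partial\lambda(\cdot,1)$ at both boundary foot points $l, r$ of $I_\mathcal{C}$ via Lemma~\ref{derivative F} and show that the two values carry the same sign. The main obstacle is making this sign computation uniform over an infinite family of itineraries: my plan is to parametrize the perpendicular directions by the lattice and rewrite the alternating sums $\sum_{k=1}^{2n} (-1)^k \theta_k L_k(s)$ in closed form using the translation-surface structure, from which one should see that the strict inequality in Theorem~\ref{suf prop} fails at both $l$ and $r$ for every perpendicular cylinder whose period exceeds four.
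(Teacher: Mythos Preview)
Your outline tracks the paper's argument closely through the first two steps, but there are two issues in the exclusion step.

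First, a minor correction: the Niven-type restriction does not cut the list down to $\{0,\pm\pi/6\}$. Since the unfolded slope is a rational multiple of $\sqrt{3}$, the relevant fact (see Calcut, cited in the paper) is that the only rational multiples of $\pi$ in $(-\pi/2,\pi/2)$ whose tangent is a rational multiple of $\sqrt{3}$ are $0,\pm\pi/6,\pm\pi/3$. The angles $\pm\pi/3$ must stay on the list; indeed the period-four cylinder $\{1,2,1,3\}$ has angle sequence $\{\pi/3,0,-\pi/3,0\}$, so from side~$1$ its departure angle is $\pm\pi/3$, not $0$.

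Second, and more substantively: your last paragraph proposes to rule out an infinite family of perpendicular cylinders of period exceeding four via a uniform sign computation. No such family exists. The equilateral triangle is integrable; its six reflected copies assemble into a flat torus, and on a torus every rational direction supports exactly one cylinder. Consequently, once the admissible departure angles are pinned down to the finite set $\{0,\pm\pi/6,\pm\pi/3\}$, each angle corresponds to a \emph{single} cylinder in the triangle. The paper uses this (tacitly) to assert that the angles $\{0,\pm\pi/3\}$ each pick out the same period-four cylinder $\mathcal{C}$ (and its reverse $\mathcal{C}^{-1}$), so the only remaining task is the explicit verification of Theorem~\ref{suf prop} for that one cylinder, which you already sketched. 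The closed-form computation you propose over the translation-surface structure is therefore unnecessary; the ``infinite family'' it targets is empty.
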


Using the same ideas as in the proof of Proposition~\ref{prop-equilateral} we obtain the following result.

\begin{proposition}[Regular hexagon]\label{prop:hexagon}
In the regular hexagon only the two Fagnano periodic cylinders of period six, the four Fagnano periodic cylinders of period three and the three ping-pong cylinders are $\lambda$-stable.
\end{proposition}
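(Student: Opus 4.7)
The plan is to follow the blueprint used for Proposition~\ref{prop-equilateral}, exploiting the large symmetry group $D_6$ of the regular hexagon and the tiling structure produced by its rational billiard unfolding. First, observe that the cylinders listed in the statement really are $\lambda$-stable: the six Fagnano cylinders (two of period six for $m=1,5$ and four of period three for $m=2,4$ with two possible starting sides each) are $\lambda$-stable by Proposition~\ref{regular}, and the three ping-pong cylinders between the three pairs of opposite sides are $\lambda$-stable by the remark immediately following the definition of $\lambda$-stability.

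For the converse, I would enumerate all periodic cylinders modulo the natural action of $D_6$ and the time-reversal symmetry of Corollary~\ref{C and C minus}. Since the regular hexagon is rational with algebraic vertices, Theorem~\ref{thm-finite} already gives finiteness of the set of $\lambda$-stable cylinders, but we need an explicit list. Every periodic orbit unfolds to a straight segment in the flat surface obtained by reflecting copies of the hexagon; the slope of each cylinder is rational with respect to the underlying lattice, and the $D_6$-action together with time-reversal collapses the infinite family of itinerary classes to a finite set of representatives.

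For each representative itinerary $(i_0,\dots,i_{2n-1})$ that is not Fagnano and not ping-pong, I would show that the necessary condition of Proposition~\ref{necessary condition} fails. Concretely, I would compute the true angle of departure $\hat\theta_0$ of the cylinder from the unfolded geometry and compare it to the value
\[
\frac{1}{2n}\sum_{k=0}^{2n-1}(-1)^{k+1}k\,\beta_{i_k,i_{k+1}}
\]
forced by $\lambda$-stability. The key simplification is that in the regular hexagon $\beta_{i,j}$ depends only on $j-i\pmod{6}$, so the sum reduces to an easily analysed expression in the multiplicities with which each "jump" $j-i$ appears in the itinerary. When the necessary condition happens to be satisfied, I would fall back on Remark~\ref{not lambda stable} and check that $\partial F_{2n}/\partial\lambda$ at $\lambda=1$ has the same sign at the two endpoints $l,r$ of the base interval $I_\mathcal{C}$, using the explicit formula of Lemma~\ref{derivative F}.

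The main obstacle will be the enumeration of non-Fagnano itineraries: because the hexagon has three pairs of parallel sides, there are infinitely many candidate cylinders that are "almost ping-pong" (itineraries that alternate between two parallel sides but occasionally stray) or "almost Fagnano" (itineraries that follow the $m=1$ or $m=2$ pattern with a few defects), and one must verify that none of them survive the necessary condition. The $D_6$ symmetry, time reversal, and the fact that the Veech-type structure of the hexagon constrains the possible slopes should cut this enumeration down to a manageable finite case analysis, essentially identical in spirit to the one carried out for the equilateral triangle.
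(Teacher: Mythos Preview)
Your proposal has a genuine gap: you never identify the mechanism that collapses the infinite family of periodic cylinders to a finite list of candidate angles. The $D_6$ action and time-reversal together form a finite group, so quotienting by them still leaves infinitely many itinerary classes; invoking ``Veech-type structure'' is too vague to do the work. You even flag this yourself as ``the main obstacle'' and then hope it will go away, but you do not say how.

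The paper's argument supplies exactly the missing ingredient. Unfolding the hexagon into the plane via the same triangular lattice used in the equilateral-triangle proof shows that the slope of \emph{every} periodic cylinder is a rational multiple of $\sqrt{3}$. On the other hand, Proposition~\ref{necessary condition} forces the departure angle of any $\lambda$-stable cylinder to be a rational multiple of $\pi$. The number-theoretic result of Calcut (also used in Proposition~\ref{prop-equilateral}) then says that the only rational multiples of $\pi$ in $(-\pi/2,\pi/2)$ whose tangent is a rational multiple of $\sqrt{3}$ are $\{0,\pm\pi/6,\pm\pi/3\}$. This single step replaces your proposed infinite enumeration by a five-element set of angles; after that, one just reads off which cylinder hits the horizontal side at each of those angles and recognises them as the ping-pong and Fagnano cylinders already known to be $\lambda$-stable. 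No case-by-case checking of Remark~\ref{not lambda stable} is needed.
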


Regarding integrable right triangles we have the following results.

\begin{proposition}[Right triangle with 30-60-90 degrees] \label{306090}
In the right triangle with 30-60-90 degrees only the following periodic cylinders are $\lambda$-stable:
\begin{enumerate}
\item the two periodic cylinders of period six which are perpendicular to the shorter leg and the diagonal of the triangle,
\item the two periodic cylinders of period six which are perpendicular to the diagonal of the triangle,
\item the two periodic cylinders of period ten which are perpendicular to the longer leg of the triangle.
\end{enumerate}
\end{proposition}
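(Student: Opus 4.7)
My plan is to follow the strategy used for Proposition~\ref{prop-equilateral}: classify the periodic cylinders of the 30-60-90 triangle via its unfolded translation surface, verify $\lambda$-stability of the six listed cylinders using Theorem~\ref{suf prop}, and eliminate every other cylinder using the necessary condition of Proposition~\ref{necessary condition}. Since the triangle is rational with algebraic vertices, Corollary~\ref{corollary triangles} already guarantees only finitely many $\lambda$-stable cylinders exist, so the argument is a finite case analysis. With $N = \operatorname{lcm}(2,3,6) = 6$ the translation surface is built from $12$ copies of the triangle and is a Veech lattice surface (a finite cover of the torus); periodic directions form a countable set and in each completely periodic direction the surface decomposes into finitely many cylinders. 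The triangle has no parallel sides, so there are no ping-pong cylinders, and a direct inspection of the unfolding shows that every periodic cylinder has even period greater than two, placing us in the setting of Section~\ref{nec and suf section}.

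For the six cylinders in items (1)-(3) the angle of departure is $\hat\theta_0 = 0$. For each of the three families I would first verify the necessary condition of Proposition~\ref{necessary condition} by substituting the relevant $\beta_{i_k, i_{k+1}}$ (each an integer multiple of $\pi/6$) and checking that $\sum_{k=0}^{2n-1}(-1)^{k+1}k\,\beta_{i_k,i_{k+1}} = 0$, with $2n = 6,6,10$ respectively. To prove $\lambda$-stability I would then apply Theorem~\ref{suf prop} with endpoints $a<b$ chosen symmetrically around the central orbit of the cylinder; the reflective symmetry shared by every perpendicular cylinder makes $s \mapsto \sum_{k=1}^{2n}(-1)^k\theta_k L_k(s)$ a strictly monotone affine function passing through $\Omega_0 L$ at the symmetric center, so condition \eqref{main suf condition} is satisfied with a choice of $(a,b)$ arbitrarily close to the endpoints of $I_\mathcal{C}$.

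The main obstacle, and the bulk of the work, is to rule out every other periodic cylinder. For each completely periodic direction on the translation surface I would compute the geometric angle of departure $\hat\theta_0$ directly from the slope, and compare it with the value $\frac{1}{2n}\sum (-1)^{k+1}k\,\beta_{i_k,i_{k+1}}$ forced by Proposition~\ref{necessary condition}; whenever these disagree the cylinder cannot be $\lambda$-stable. The $12$-fold dihedral symmetry of the translation surface cuts this enumeration into a small number of orbit classes, and within each class the comparison reduces to a short trigonometric identity involving multiples of $\pi/6$. After exhausting these classes only the six cylinders of (1)-(3) survive, completing the proof.
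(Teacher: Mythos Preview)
Your overall plan matches the paper's: restrict the possible angles of departure using the necessary condition of Proposition~\ref{necessary condition}, then verify $\lambda$-stability of the surviving cylinders via Theorem~\ref{suf prop}. Two steps in your proposal, however, do not go through as written.

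First, the symmetry argument you invoke for the sufficient condition is not valid here. The 30--60--90 triangle has no nontrivial isometry, and the time-reversal symmetry that a perpendicular orbit enjoys fixes each orbit in the cylinder individually rather than interchanging the two boundary diagonals. There is therefore no reason for the affine function $s\mapsto \sum_{k=1}^{2n}(-1)^k\theta_k L_k(s)$ to take the value $\Omega_0 L$ at the midpoint of $I_{\mathcal C}$. Concretely, for the period-six cylinder with itinerary $\{1,3,2,3,2,3\}$ and $\hat\theta_0=0$, the paper's explicit computation gives endpoint values $-\tfrac{7\pi}{3\sqrt 3}$ and $0$, while $\Omega_0 L = -\tfrac{5\pi}{3\sqrt 3}$; the crossing occurs two-sevenths of the way across the base, not at the centre. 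The paper establishes~\eqref{main suf condition} for each of the three cylinders by computing the sequences $\{L_k(l)\}$ and $\{L_k(r)\}$ directly and checking the two inequalities by hand; no symmetry shortcut is available, and you will need to do the same.

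Second, your elimination of the remaining cylinders is not a finite procedure as stated. Even after quotienting by the dihedral symmetry of the unfolding there are still infinitely many periodic directions, so ``exhausting these classes'' has no clear meaning. The paper supplies the missing number-theoretic step: unfolding in the lattice generated by $(1,0)$ and $(1,\tan\pi/3)$ shows every periodic direction has slope in $\mathbb{Q}\sqrt 3$, while Proposition~\ref{necessary condition} forces the angle of departure to be a rational multiple of~$\pi$ (since every $\beta_{i,j}$ is a multiple of $\pi/6$). By Calcut's result, already cited in the proof for the equilateral triangle, the only rational multiples of~$\pi$ in $(-\pi/2,\pi/2)$ whose tangent lies in $\mathbb{Q}\sqrt 3$ are $\{0,\pm\pi/6,\pm\pi/3\}$, and these five angles correspond precisely to the three families (1)--(3). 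Your ``short trigonometric identity'' is exactly this statement; you need to invoke it explicitly to make the elimination finite.
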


For the isosceles right triangle we have,

\begin{proposition}[Right triangle with 45-45-90 degrees] \label{454590}
In the right triangle with 45-45-90 degrees only the two periodic cylinders of period four which are perpendicular to the legs of the triangle and the two periodic cylinders of period six which are perpendicular to the diagonal of the triangle are $\lambda$-stable.
\end{proposition}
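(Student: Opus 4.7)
My plan is to combine the integrable structure of the 45-45-90 triangle with a Niven-type rationality argument to identify all $\lambda$-stable cylinders, and then to apply Theorem~\ref{suf prop} to verify $\lambda$-stability for the exceptional cylinders. By Corollary~\ref{corollary triangles}, the set of $\lambda$-stable cylinders is already known to be finite, but the argument below will characterize them exactly.

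The 45-45-90 triangle is integrable: applying the Zemlyakov-Katok construction, the union of the eight reflected copies of the triangle forms a $2\times 2$ square torus $T$. Every periodic billiard orbit lifts to a closed geodesic on $T$, and periodic cylinders correspond to parallel families of closed geodesics, each indexed by a reduced rational slope $p/q$ with $\gcd(p,q)=1$ together with a choice of starting side.

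For the necessary direction I would argue as follows. For a cylinder associated to slope $p/q$, the direction of the lifted geodesic on $T$ makes angle $\alpha=\arctan(p/q)$ with a fixed axis. Folding back to the triangle, the outgoing direction at any reflection is the image of this direction under the dihedral group $D_4$ generated by reflections across the three sides, so it has angle $\pm\alpha + k\pi/2$ for some $k\in\mathbb{Z}$. Since the inward normals of the 45-45-90 triangle have angles that are multiples of $\pi/4$, the angle of departure $\hat\theta_0$ has the form $\pm\alpha + n\pi/4$ for some $n\in\mathbb{Z}$. On the other hand, Proposition~\ref{necessary condition} forces the $\hat\theta_0$ of any $\lambda$-stable cylinder to be an explicit rational linear combination of the $\beta_{i,j}$'s, and since each $\beta_{i,j}$ is a rational multiple of $\pi$ in the 45-45-90 triangle, so is $\hat\theta_0$. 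Subtracting $n\pi/4$ then shows that $\alpha$ itself must be a rational multiple of $\pi$, and Niven's classical theorem on rational values of the tangent at rational multiples of $\pi$ forces $p/q \in \{0, \pm 1, \pm\infty\}$. The slopes $0$ and $\infty$ fold to periodic cylinders of period $4$ perpendicular to the legs of the triangle, and the slopes $\pm 1$ fold to periodic cylinders of period $6$ perpendicular to the hypotenuse; all other slopes are excluded by the necessary condition.

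For the sufficient direction I would apply Theorem~\ref{suf prop} to each of the four candidate cylinders. In each case $\hat\theta_0 = 0$, and the palindromic structure of the itinerary combined with Lemma~\ref{prop-angles} yields $\Omega_0 = 0$. Since the nonzero $\theta_k$'s equal $\pm\pi/4$ and the lengths $L_k(s)$ depend affinely on the base foot point $s\in I_\mathcal{C}$ with nonzero slope in $s$, the quantity $\sum_{k=1}^{2n}(-1)^k\theta_k L_k(s)$ is itself an affine function of $s$ with nonzero slope, and hence strictly changes sign across $I_\mathcal{C}$. Picking $a,b\in I_\mathcal{C}$ on opposite ends of this sign change verifies the inequality~\eqref{main suf condition}, which by Theorem~\ref{suf prop} delivers $\lambda$-stability. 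The main obstacle will be keeping track of the signs of the $\beta_{i,j}$'s and the $\theta_k$'s, which depend on the orientation convention; once a convention is fixed, the reflection symmetry of the triangle combined with Corollary~\ref{C and C minus} reduces the verification to a single period-4 and a single period-6 computation.
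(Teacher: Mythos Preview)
Your necessary direction is correct and is essentially the paper's own argument: unfold the 45-45-90 triangle to a square, observe that periodic orbits have rational slope $p/q$, note that Proposition~\ref{necessary condition} forces the angle of departure to be a rational multiple of $\pi$, and invoke Niven's theorem to conclude $p/q\in\{0,\pm1\}$ (plus the vertical direction). The paper does exactly this, referring back to the rectangle/square computation.

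Your sufficient direction, however, has a genuine gap. The claim that $\Omega_0=0$ for these perpendicular cylinders is false. For the period-4 cylinder with itinerary $\{1,2,3,2\}$ (leg, hypotenuse, leg, hypotenuse) one has $\beta_{1,2}=\beta_{2,3}=\pi/4$ and $\beta_{3,2}=\beta_{2,1}=-\pi/4$, and a direct computation gives
\[
\Omega_0=\frac{1}{8}\Bigl(3\cdot\tfrac{\pi}{4}+4\cdot\tfrac{\pi}{4}-3\cdot\tfrac{\pi}{4}\Bigr)=\frac{\pi}{8}\neq 0.
\]
The palindromic symmetry of the itinerary gives $\hat\theta_0=0$ via Lemma~\ref{prop-angles}, but the weights $k(2n-k)$ in $\Omega_0$ are not antisymmetric under $k\mapsto 2n-k$ in the way needed to force cancellation. (Compare the paper's computations for the 30-60-90 triangle, where $\Omega_0$ is nonzero in every case.) Once $\Omega_0\neq 0$, your subsequent claim that the affine function $s\mapsto\sum_{k}(-1)^k\theta_k L_k(s)$ ``strictly changes sign across $I_{\mathcal C}$'' is both aimed at the wrong target (you need it to cross $\Omega_0 L$, not $0$) and unjustified: an affine function with nonzero slope crosses any given value somewhere, but nothing in your argument places that crossing inside $I_{\mathcal C}$. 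The paper handles this by evaluating the sums explicitly at the two boundary generalized diagonals $l,r$ and checking inequality~\eqref{main suf condition} directly, exactly as in the proofs of Propositions~\ref{prop-equilateral} and~\ref{306090}; that explicit verification is what your outline is missing.
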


\section{Proofs}\label{sec:proofs}

In this section we collect the proofs of the results stated in section~\ref{sec:results}.

\subsection{Proof of Theorem~\ref{thm-finite}}

Let $P$ be a rational polygon whose vertices have algebraic coordinates. Any rational polygon has at most finitely many odd length periodic orbits (\cite[p.30]{GSV}). These are $\lambda$-stable by Proposition~\ref{prop-odd}. Now we turn to even length periodic cylinders. Let $V=\{v_i\}$ denote the set of vectors obtained by the action of the finite group of reflections $G(P)$ on the set of vertices of $P$. Clearly, $V$ is a finite set
(orientation is unimportant) and the coordinates of each $v_i\in V$ are algebraic. 
Consider a $\lambda$-stable periodic cylinder of even length $2n$, and let 
$v$ be a generalized diagonal on the boundary of this cylinder. Any generalized diagonal can be represented as a sum of the $v_i$, i.e., 
there exists $j_i \in \mathbb{Z}$ such that 
$$
v = \sum j_i v_i.
$$
Proposition \ref{necessary condition} tells us that the slope of $v$ is 
$$
\tan\left(\frac{1}{2n} \sum_{k=0}^{2n-1} (-1)^{k+1}k\beta_{i_k,i_{k+1}}\right).
$$ 
Since $P$ is rational all the angles
$\beta_{i,j}$ are rational multiples of $\pi$, thus the slope of $v$ equals $\tan(p \pi/q)$ for some co-prime integers $p$ and $q> 0$. Because all $v_i$ have algebraic coordinates, there exists an integer $N=N(P)>0$ such that the algebraic degree of the slope of $v$ is less than $N$. However, by Niven's theorem \cite[Theorem 3.11]{Niven}, the algebraic degree of $\tan(p \pi/q)$ is $\varphi(q)/2$ if $q$ is a multiple of $4$ and $\varphi(q)$ otherwise, where $\varphi$ denotes the Euler's totient function. Thus $\varphi(q)\leq 2 N$ in either case. This implies that $q\in \varphi^{-1}(\{1,\ldots,2N\})$, which is a finite set. Therefore
there are only finitely many such slopes, which correspond to a finite collection of directions $\{\theta_i: i \in I\}$ in the billiard.
Since our polygon is rational, the billiard orbits starting in a given direction $\theta$ can only take a finite number of directions,
call this set of directions $[\theta]$.  Thus the possible directions of generalized diagonals on the boundary of a $\lambda$-stable
cylinder are in the finite set $\{\theta: \theta \in [\theta_i] \text{ for some } i \in I\}$. Since each generalized diagonal starts at a corner of the polygon, i.e., in a finite
collection of points, there are only finitely many such generalized diagonals, 
 thus only finitely possible
many $\lambda$-stable cylinders in $P$.\hfill $\Box$

\subsection{Proof of Proposition~\ref{regular}}
Let $P$ be a regular $d$-gon. Label the edges of $P$ anticlockwise from 1 to $d$. If the period $p$ of the cylinder is odd, then the cylinder is $\lambda$-stable by Proposition~\ref{prop-odd}. Otherwise suppose that $p$ is even, say $p=2n$. Also suppose that the Fagnano cylinder has itinerary $\{i_k\}_{k=0}^{2n-1}$ with $i_k=1+km\pmod{d}$ for some integer $1\leq m\leq \lfloor d/2\rfloor$. 
In the case $d$ is even, $m=d/2$ gives a ping-pong cylinder which we already know that is $\lambda$-stable. So we assume that $m<d/2$. 
Then for every $k=0,\ldots,2n-1$,
$$
\beta_{i_k,i_{k+1}}=\pi\left(1-\frac{2m}{d}\right) \quad \text{and}\quad \theta_k=\frac\pi2\left(1-\frac{2m}{d}\right),
$$
where the indices $k$ of $\beta_{i_k,i_{k+1}}$ are taken mod $2n$. Therefore, 
\begin{align*}
\frac{1}{2n}\sum_{k=0}^{2n-1}(-1)^{k+1}k\beta_{i_k,i_{k+1}}&=\frac{\pi}{2n}\left(1-\frac{2m}{d}\right)\sum_{k=0}^{2n-1}(-1)^{k+1}k\\
&=\frac\pi2\left(1-\frac{2m}{d}\right).
\end{align*}
So the angle of departure of the cylinder satisfies the formula in Proposition~\ref{necessary condition}. Now we verify the condition in Theorem~\ref{suf prop} (see also Remark~\ref{l and r base foot points}). First note that  
\begin{align*}
\Omega_0&=\frac{1}{4n}\sum_{k=0}^{2n-1}(-1)^{k+1} k(2n-k)\beta_{i_k,i_{k+1}}\\
&=\frac{\pi}{4n}\left(1-\frac{2m}{d}\right)\sum_{k=0}^{2n-1}(-1)^{k+1} k(2n-k)\\
&=\frac{\pi}{4}\left(1-\frac{2m}{d}\right).
\end{align*}
Hence,
$$
\Omega_0 L =\frac{L\pi}{4}\left(1-\frac{2m}{d}\right),
$$
where $L$ denotes the length of the cylinder. Taking into account that, 
$$
L_{k+1}(l)-L_{k}(l)=L_1(r)\quad\text{and}\quad L_{k+1}(r)-L_{k}(r)=L_1(l)
$$
for $k=1,3,5,\ldots,2n-1$, we get
\begin{align*}
\sum_{k=1}^{2n}(-1)^k\theta_{k}L_k(l)&=\frac\pi2\left(1-\frac{2m}{d}\right)\sum_{k=1}^{2n}(-1)^k L_k(l)\\&=\frac\pi2\left(1-\frac{2m}{d}\right)nL_1(r).
\end{align*}
Similarly,
$$
\sum_{k=1}^{2n}(-1)^k\theta_{k}L_k(r)=\frac\pi2\left(1-\frac{2m}{d}\right)nL_1(l).
$$
Since $2n L_1(r)<L<2n L_1(l)$, the hypothesis of Theorem~\ref{suf prop} holds, and so the cylinder is $\lambda$-stable. To show the last claim let $m\in I_\mathcal{C}$ denote the midpoint of the interval $I_\mathcal{C}$ and note that $L_{k+1}(m)-L_k(m)=L/(2n)$ for $k=0,\ldots,2n-1$. Thus,
\begin{align*}
\sum_{k=1}^{2n}(-1)^k\theta_{k}L_k(m)&=\frac\pi2\left(1-\frac{2m}{d}\right)\sum_{k=1}^{2n}(-1)^k L_k(m)\\
&=\frac{L\pi}{4}\left(1-\frac{2m}{d}\right)=\Omega_0 L.
\end{align*}
Now we can take arbitrarily small $\epsilon>0$ and set $a=m-\epsilon$ and $b=m+\epsilon$ so that the hypothesis of Theorem
\ref{suf prop} holds for this choice of $a$ and $b$. Hence, the periodic orbit in the center of the cylinder is $\lambda$-stable. \hfill $\Box$

\subsection{Proof of Proposition~\ref{prop:rectangle}}

Each rectangle has two ping-pong cylinders, they are stable.

By rotating and rescaling we can  suppose that the rectangle has vertical and horizontal sides width $w$ and height $1$.
Then we have a periodic orbit of period $2(|p|+|q|)$ if and only if  $p$ and $q$ are relatively prime and the 
slope of the orbit is $p/(qw)$. 
We consider the periodic cylinder starting on a vertical side with right boundary the origin.  See Figure~\ref{2/5}, drawn in the case of aspect ratio 1.

%

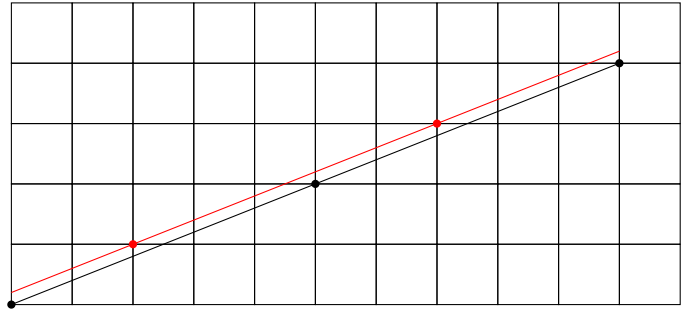
\begin{figure}[h]
\begin{tikzpicture}[scale=0.8]
\foreach \i in {0,...,10}    
\foreach \j in {0,...,4}
 \draw (\i,\j) rectangle (\i+1,\j+1);
 \draw[thick,fill] (0,0)  circle (0.05);
   \draw[thick,fill]  (5,2)  circle (0.05);
      \draw[thick,fill]  (10,4)  circle (0.05);
            \draw[thick,fill,red]  (2,1)  circle (0.05);
            \draw[thick,fill,red]  (7,3)  circle (0.05);
 \draw[] (0,0) -- (10,4);
  \draw[red] (0,0.2) -- (10,4.2);
 
\end{tikzpicture}
\caption{Slope $2/5$.  {\footnotesize $( \beta_{i_k,i_{k+1}})_{k =0}^{13} = (0, 0, \minus \frac{\pi}2, \minus \frac{\pi}2, 0, \frac{\pi}2, \frac{\pi}2,0, 0, \frac{\pi}2,  \frac{\pi}2, 0, \minus\frac{\pi}2, \minus\frac{\pi}2)$} }\label{2/5}
 \end{figure}

Consider the alternating sum of Proposition~\ref{necessary condition}
$$
\frac{1}{2(p + q)}\sum_{k=0}^{2(p + q)-1}(-1)^{k+1}k\beta_{i_k,i_{k+1}}.
$$

We suppose that $p$ and $q$ are strictly positive, and that $p \le q$. 
We claim that, other than the vertical and horizontal directions, the other cases reduce to this case.
First of all we can assume $p$ are $q$ are strictly positive using the symmetries of the rectangle.
Furthermore note that  there is a $\lambda$-stable periodic orbit with slope $p/qw$ in the rectangle with aspect ratio
$w$ if and only if there there is a $\lambda$-stable periodic orbit with slope $q/pw^{-1}$ in the rectangle with aspect ratio
$w^{-1}$.

%

First consider the case $p=q$, then
since $p$ and $q$ are relatively prime we have $p=q=1$ and all the $\beta_{i_k,i_{k+1}}$ equal $- \frac{\pi}{2}$, thus 
$$
\frac{1}{2(p + q)}\sum_{k=0}^{2(p + q)-1}(-1)^{k+1}k\beta_{i_k,i_{k+1}}=
\frac{-\pi}{8}\sum_{k=0}^{3}(-1)^{k+1}k = -\frac{\pi}{4}.
$$
Now consider the case $p<q$. Note that the terms coming from hitting opposite sides do not contribute to the sum since 
for any $i$ we have
$\beta_{i,i+2} = 0$. The contribution comes from the $\beta_{i_k,i_{k+1}}$ which correspond to a collision between adjacent sides. In that case $\beta_{i_k,i_{k+1}}=\pm\pi/2$.

We can always decompose the sequence of angles $(\beta_{i_k,i_{k+1}})_{k=0}^{2(p+q)-1}$ in blocks of the form 
$$
\left(\beta_{i_r,i_{r+1}},\ldots,\beta_{i_{s},i_{s+1}},\beta_{i_{s+1},i_{s+2}},\ldots,\beta_{i_{t},i_{t+1}}\right)
$$
with $n:=s-r+1$ angles $\beta_{i_r,i_{r+1}},\ldots,\beta_{i_{s},i_{s+1}}$ corresponding to consecutive collisions between opposite sides ($\beta_{i,i+2} = 0$), and $m:=t-s$ angles $\beta_{i_{s+1},i_{s+2}},\ldots,\beta_{i_{t},i_{t+1}}$ corresponding to consecutive collisions between adjacent sides ($\beta_{i,i+1} = -\beta_{i+1,i}=\pi/2$). Note that $m$ and $n$ are non-negative integers (might be zero) but $m$ is always even. Let us suppose that there are $N\geq1$ such blocks. Each block, say the $j$-th block, with $n_j:=s_j-r_j+1$ and $m_j:=t_j-s_j$ corresponding to the number of collisions between opposite and adjacent sides, respectively. In the example depicted in Figure~\ref{2/5} there are 4 blocks,
$$
\left\{(0,0,\minus \,\frac{\pi}2, \minus\,\frac{\pi}2),(0, \frac{\pi}2, \frac{\pi}2),(0,0,\frac{\pi}2,\frac{\pi}2),(0, 
 \minus\,\frac{\pi}2, \minus\,\frac{\pi}2)\right\}.
$$
Note that
\begin{equation}\label{eq indices}
r_j=n_1+\cdots+n_{j-1}+m_1+\cdots+m_{j-1}\quad\text{and}\quad s_j+1=r_j+n_j.
\end{equation}
Thus, the contribution of each block is
\begin{align*}
\sum_{k=r_j}^{t_j}(-1)^{k+1}k\beta_{i_k,i_{k+1}}&=\sum_{k=r_j}^{s_j}(-1)^{k+1}k\beta_{i_k,i_{k+1}}+\sum_{k=s_j+1}^{t_j}(-1)^{k+1}k\beta_{i_k,i_{k+1}}\\
&=\sum_{k=s_j+1}^{t_j}(-1)^{k+1}k\beta_{i_k,i_{k+1}}\\
&=\beta_{i_{s_j+1},i_{s_j+2}}\sum_{k=s_j+1}^{t_j}(-1)^{k+1}k\\
&=\beta_{i_{s_j+1},i_{s_j+2}}(-1)^{s_j}\sum_{k=0}^{m_j-1}(-1)^k(s_j+1+k).
\end{align*}
Clearly,
$$
\sum_{k=0}^{m_j-1}(-1)^k(s_j+1+k)=-\frac{m_j}{2}.
$$
Moreover, $\beta_{i_{s_j+1},i_{s_j+2}}=\pm\frac\pi2$. In fact, since $\beta_{i_{s_j+1},i_{s_j+2}}=(-1)^{n_j}\beta_{i_{s_{j-1}+1},i_{s_{j-1}+2}}$ for every $2\leq j\leq N$, and $\beta_{i_{s_1+1},i_{s_1+2}}=(-1)^{n_1+1}\frac\pi2$ we have,
$$
\beta_{i_{s_j+1},i_{s_j+2}}=(-1)^{n_1+\cdots+n_j+1}\frac{\pi}{2}.
$$ 
Hence,
$$
\sum_{k=r_j}^{t_j}(-1)^{k+1}k\beta_{i_k,i_{k+1}}=\frac{m_j\pi}{4} (-1)^{n_1+\cdots+n_j+s_j}.
$$
By \eqref{eq indices} we have,
$$
n_1+\cdots+n_j+s_j+1=2(n_1+\cdots+n_j)+m_1+\cdots+m_{j-1},
$$
which is an even number since the $m_i$ are even. Thus,
$$
\sum_{k=r_j}^{t_j}(-1)^{k+1}k\beta_{i_k,i_{k+1}}=-\frac{m_j\pi}{4}.
$$
Collecting all the $N$ blocks we obtain,
$$
\frac{1}{2(p + q)}\sum_{k=0}^{2(p + q)-1}(-1)^{k+1}k\beta_{i_k,i_{k+1}}=-\frac{\pi}{8(p + q)}\sum_{j=1}^Nm_j.
$$
Every time the cylinder crosses a horizontal line it corresponds to two consecutive collisions between adjacent sides. Since there are $2p$ such crossings we have
$$
\sum_{j=1}^Nm_j = 4p.
$$
This implies that the periodic cylinder with slope $p/(qw)$ is $\lambda$-stable in the rectangle with aspect ratio $w$ only if $p$ and $q$ satisfy the equation
$$
\frac{p}{qw}=\tan\left(\frac{\pi p}{2(p + q)}\right).
$$
This concludes the proof.\hfill $\Box$

\subsection{Proof of Corollary~\ref{prop-square}}
Proposition \ref{prop:rectangle} shows that
a periodic cylinder with slope $p/q$ is $\lambda$-stable in the square only if 
$$
\frac{p}{q}  = \tan \left ( \frac{\pi p}{2(|p|+|q|)} \right ).
$$
It is known that the only rational values of the tangent function at rational multiples of $\pi$
are $0$ and $\pm 1$ \cite{MR2519490}.  In particular the only solutions are $p =0$ and $|p| =  |q| =  1$.  These solutions correspond to the slope $0$ of period 2 ping-pong cylinder and the slope $\pm 1$ of period four cylinders (Fagnano cylinders). The two Fagnano cylinders are  $\lambda$-stable by Proposition~\ref{regular}. \hfill $\Box$

\subsection{Proof of Proposition~\ref{prop-equilateral}}
The two Fagnano orbits are the only orbits primitive orbits of odd period \cite{Baxter-Umble} (and thus $\lambda$-stable by Proposition \ref{prop-odd}).

Now consider even length periodic orbits, suppose that one side of the triangle is horizontal and of length 1.
Consider the vectors $(1,0)$ and $(\cos \pi/3, \sin \pi/3)$ as a basis of the plane.
Tessellate  the plane with equilateral triangles and express the vertices of these triangles in this basis.
Then the point $(p,q)$ in this basis is the point $(p+q \cos(\pi/3), q \sin(\pi/3)) = (p + q/2, q\sqrt{3}/2)$ in rectangular coordinates.
Thus the slope of a periodic orbit is of the form 
$ \frac{q\sqrt{3}}{2p+q}$.

Consider a $\lambda$-stable periodic cylinder, it must hit the horizontal side of the triangle.  Consider the angle $\theta$ which the 
cylinder makes with this side at the time of (some) collision.
Proposition \ref{necessary condition} tells us that $\theta$ is a rational multiple of $\pi$.
We conclude that
any $\lambda$-stable periodic cylinder must have a slope equal to the tangent of an angle which is a rational multiple of $\pi$. 
But it is known that the only rational multiples of $\pi$ for which the tangent  is a rational multiple of $\sqrt{3}$ are $0,  \pm \pi/6, \pm \pi/3, \pm 2\pi/3,  \pm 5\pi/6$ \cite{Calcut}.
Since $\theta \in (-\pi/2,\pi/2)$ we have  $\theta \in  \{0,  \pm \pi/6, \pm \pi/3\}$

The only periodic cylinder with angles in the set $\{\pm \pi/6\}$ are  the two Fagnano cylinders, which we already know are $\lambda$-stable.
 Each of the three other   possibilities $\{0, \pm \pi/3\}$  corresponds to a  twice periodic cylinder $\mathcal{C}$ and $\mathcal{C}^{-1}$ of period 4. Now, using Theorem~\ref{suf prop} (see also Remark~\ref{l and r base foot points}), we show that $\mathcal{C}$ is $\lambda$-stable (see Figure~\ref{fig:equilateral}). 

\begin{figure}[h]
	\centering
	\includegraphics[width=2.5in]{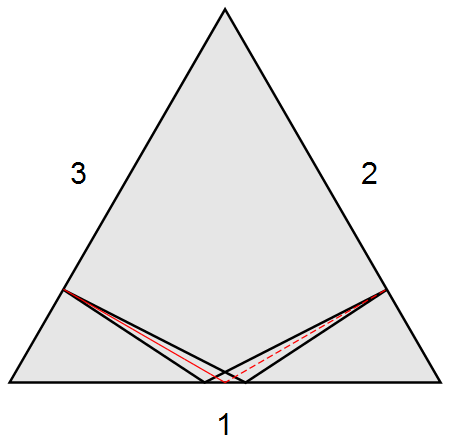}
	\caption{The dashed line represents the $\lambda$-stable periodic orbit with itinerary $\{1,2,1,3\}$ and the solid line the  associated periodic orbit for $\Phi_\lambda$ with $\lambda=0.9$.}
	\label{fig:equilateral}
\end{figure}

Label the sides of the equilateral triangle anticlockwise from 1 to 3 starting on the horizontal edge. Suppose that $\mathcal{C}$ has itinerary $\{1,2,1,3\}$. Note that $\beta_{1,2}=\beta_{3,1}=\pi/3$ and $\beta_{2,1}=\beta_{1,3}=-\pi/3$. Therefore, 
\begin{align*}
\theta_0&=\frac{1}{4}\sum_{k=0}^{3}(-1)^{k+1} k\beta_{i_k,i_{k+1}}=\frac\pi3,\\
\Omega_0&=\frac{1}{8}\sum_{k=0}^{3}(-1)^{k+1} k(4-k)\beta_{i_k,i_{k+1}}=\frac\pi6.
\end{align*}
The angle sequence $\{\theta_k\}_{k=0}^3$ of the periodic cylinder is $\{\pi/3,0,-\pi/3,0\}$. Moreover, the lengths $L_k(s)$ evaluated at the left and right generalized diagonals forming the boundary of the cylinder are
\begin{align*}
L_1(l)&=\frac{\sqrt{3}}{2},\quad L_2(l)=L_3(l)=L_4(l)=\sqrt{3},\\
L_1(r)&=L_2(r)=0,\quad L_3(r)=\frac{\sqrt{3}}{2},\quad L_4(r)=\sqrt{3}.
\end{align*}
Using these numbers we compute,
$$
\sum_{k=1}^{4}(-1)^k\theta_{k}L_k(l)=0\quad\text{and}\quad  \sum_{k=1}^{4}(-1)^k\theta_{k}L_k(r)=\frac{\pi}{\sqrt{3}}.
$$
Since $L=\sqrt{3}$ and $0<\frac{\pi}{2\sqrt{3}}<\frac{\pi}{\sqrt{3}}$, we see that the hypothesis of Theorem~\ref{suf prop} is satisfied. Thus $\mathcal{C}$ is $\lambda$-stable and by Lemma~\ref{Cminus}, $\mathcal{C}^{-1}$ is also $\lambda$-stable.
%
\hfill $\Box$

\subsection{Proof of Proposition~\ref{prop:hexagon}}
Consider even length periodic cylinders and suppose that one side of the hexagon is horizontal as in the proof of Proposition~\ref{prop-equilateral}. 
We can apply a hexagonal symmetry to assume that this cylinder hits the horizontal side.
The same calculation as in the equilateral triangle case shows that the possible angles of periodic orbits starting on this side belong to the set
 $\{0, \pm \pi/6, \pm \pi/3\}$.
There is one cylinder with angle  $0$, it is a ping-pong cylinder, thus $\lambda$-stable. There are 2 period 3 Fagnano cylinders (with angles  $\pm \pi/6)$ and two  period 6 Fagnano cylinders with angles $\pm \pi/3$,
they are $\lambda$-stable by Proposition~\ref{regular}.
\hfill$\Box$


\subsection{Proof of Proposition~\ref{306090}}
Similar to Proposition~\ref{prop-equilateral}, we consider the vectors  $(1, \tan\pi/3)$
and $(1,0)$ as generators of the tessellation of plane by right angles of 30-60-90 degrees. 
We can always consider that  periodic orbits start on the horizontal edge.
We again see that the slopes of periodic cylinders are rational multiples of $\sqrt{3}$.  Thus again the possible angles belong to the set
 $\{0, \pm \pi/3, \pm \pi/6, \pm \pi/2\} \cap (-\pi,2,\pi/2) = \{0, \pm \pi/3, \pm \pi/6\}$.

According to this set of angles we have the following possibilities:

\begin{enumerate}
\item the angle $0 $ corresponds to the period 6 periodic cylinder which is the perpendicular cylinder to the short side and the diagonal  of the triangle,
\item the angles $\pm \pi/6$ correspond to the period 10 cylinder which is perpendicular to the longer leg of the triangle,
\item the angles $\pm \pi/3$ correspond to the period 6 cylinder which is perpendicular to the diagonal of the triangle.
\end{enumerate} 
 
 As in the proof of Proposition~\ref{prop-equilateral}, we show using Theorem~\ref{suf condition} that all these perpendicular cylinders are $\lambda$-stable.  Label the sides of the triangle anticlockwise from 1 to 3 starting on the horizontal edge. Note that
 $$
 \beta_{1,2}=-\beta_{2,1}=\frac\pi2,\quad\beta_{3,1}=-\beta_{1,3}=\frac\pi3\quad\text{and}\quad  \beta_{2,3}=-\beta_{3,2}=\frac\pi6.
 $$

Let us consider the three cases separately:
\begin{enumerate}
\item 
Suppose that $\mathcal{C}$ is the perpendicular periodic cylinder with itinerary $\{1,3,2,3,2,3\}$ (see Figure~\ref{fig:306090_6b}). Then,
\begin{align*}
\theta_0&=\frac{1}{6}\sum_{k=0}^{5}(-1)^{k+1} k\beta_{i_k,i_{k+1}}=0,\\
\Omega_0&=\frac{1}{12}\sum_{k=0}^{5}(-1)^{k+1} k(6-k)\beta_{i_k,i_{k+1}}=-\frac{5\pi}{18}.
\end{align*}

\begin{figure}[h]
	
	\centering
	\includegraphics[width=2in]{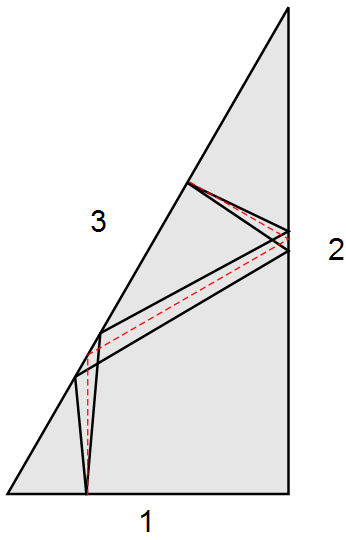}
	\caption{The dashed line represents the $\lambda$-stable periodic orbit with itinerary $\{1,3,2,3,2,3\}$ and the solid line the  associated periodic orbit for $\Phi_\lambda$ with $\lambda=0.9$.}
	\label{fig:306090_6b}
\end{figure}

The angle sequence $\{\theta_k\}_{k=0}^5$ of the periodic cylinder is 
$$
\left\{0,-\frac\pi3,\frac\pi6,0,-\frac\pi6,\frac\pi3\right\}.
$$ 
Moreover, the lengths $L_k(s)$ evaluated at the left and right generalized diagonals forming the boundary of the cylinder are
\begin{align*}
\left\{L_k(l)\right\}_{k=1}^6&=\left\{0,\frac{2}{\sqrt{3}},\sqrt{3},\frac{4}{\sqrt{3}},2\sqrt{3},2\sqrt{3}\right\},\\
\{L_k(r)\}_{k=1}^6&=\left\{\sqrt{3},\sqrt{3},\sqrt{3},\sqrt{3},\sqrt{3},2\sqrt{3}\right\}.
\end{align*}
Using these numbers we compute,
$$
\sum_{k=1}^{6}(-1)^k\theta_{k}L_k(l)=-\frac{7\pi}{3\sqrt{3}}\quad\text{and}\quad  \sum_{k=1}^{6}(-1)^k\theta_{k}L_k(r)=0.
$$
Since $L=2\sqrt{3}$ and $-\frac{7\pi}{3\sqrt{3}}<-\frac{5\pi}{3\sqrt{3}}<0$, we see that the hypothesis of Theorem~\ref{suf prop} is satisfied. Thus $\mathcal{C}$ is $\lambda$-stable and by Lemma~\ref{Cminus}, $\mathcal{C}^{-1}$ is also $\lambda$-stable.
 
\item Now suppose that $\mathcal{C}$ is the perpendicular periodic cylinder with itinerary $\{1, 2, 3, 2, 3, 2, 1, 3, 2, 3\}$ (see Figure~\ref{fig:306090_6c}). Then,
\begin{align*}
\theta_0&=\frac{1}{10}\sum_{k=0}^{9}(-1)^{k+1} k\beta_{i_k,i_{k+1}}=\frac{\pi}{6},\\
\Omega_0&=\frac{1}{20}\sum_{k=0}^{9}(-1)^{k+1} k(10-k)\beta_{i_k,i_{k+1}}=\frac{\pi}{5}.
\end{align*}

\begin{figure}[h]
	
	\centering
	\includegraphics[width=2in]{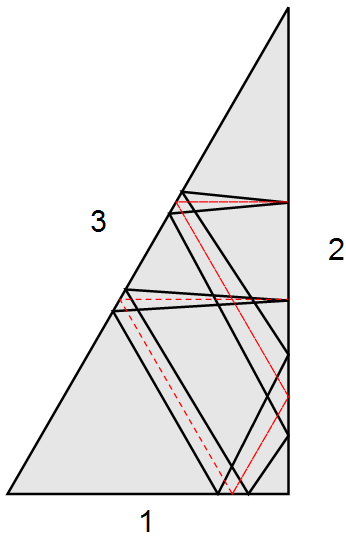}
	\caption{The dashed line represents the $\lambda$-stable periodic orbit with itinerary $\{1, 2, 3, 2, 3, 2, 1, 3, 2, 3\}$ and the solid line the  associated periodic orbit for $\Phi_\lambda$ with $\lambda=0.9$.}
	\label{fig:306090_6c}
\end{figure}

 The angle sequence $\{\theta_k\}_{k=0}^9$ of the periodic cylinder is 
$$
\left\{\frac{\pi }{6},\frac{\pi }{3},-\frac{\pi }{6},0,\frac{\pi }{6},-\frac{\pi }{3},-\frac{\pi }{6},-\frac{\pi }{6},0,\frac{\pi }{6}\right\}
$$ 
Moreover, the lengths $L_k(s)$ evaluated at the left and right generalized diagonals forming the boundary of the cylinder are
\begin{align*}
\{L_k(l)\}_{k=1}^{10}&=\{2,2,2,2,2,4,4,5,6,6\},\\
\{L_k(r)\}_{k=1}^{10}&=\left\{0,1,\frac{3}{2},2,3,3,4,\frac{9}{2},5,6\right\}.
\end{align*}
 Using these numbers we compute,
$$
\sum_{k=1}^{10}(-1)^k\theta_{k}L_k(l)=0\quad\text{and}\quad  \sum_{k=1}^{6}(-1)^k\theta_{k}L_k(r)=\frac{3\pi}{2}.
$$
Since $L=6$ and $0<\frac{6\pi}{5}<\frac{3\pi}{2}$, we see that the hypothesis of Theorem~\ref{suf prop} is satisfied. Thus $\mathcal{C}$ is $\lambda$-stable and by Lemma~\ref{Cminus}, $\mathcal{C}^{-1}$ is also $\lambda$-stable.

\item Finally, suppose that $\mathcal{C}$ is the perpendicular periodic cylinder with itinerary $\{1, 2, 3, 2, 1, 3\}$ (see Figure~\ref{fig:306090_6}). Then,
\begin{align*}
\theta_0&=\frac{1}{6}\sum_{k=0}^{5}(-1)^{k+1} k\beta_{i_k,i_{k+1}}=\frac{\pi}{3},\\
\Omega_0&=\frac{1}{12}\sum_{k=0}^{5}(-1)^{k+1} k(6-k)\beta_{i_k,i_{k+1}}=\frac{\pi}{6}.
\end{align*}

\begin{figure}[h]
	
	\centering
	\includegraphics[width=2in]{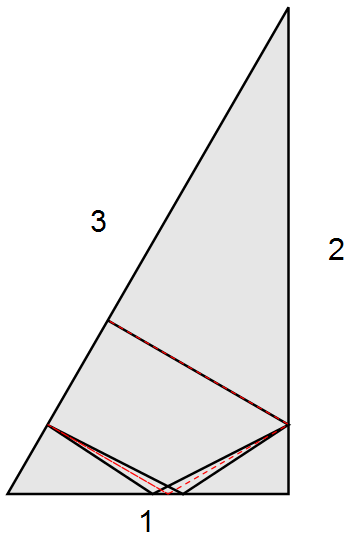}
	\caption{The dashed line represents the $\lambda$-stable periodic orbit with itinerary $\{1, 2, 3, 2, 1, 3\}$ and the solid line the  associated periodic orbit for $\Phi_\lambda$ with $\lambda=0.9$.}
	\label{fig:306090_6}
\end{figure}

The angle sequence $\{\theta_k\}_{k=0}^5$ of the periodic cylinder is 
$$
\left\{\frac{\pi }{3},\frac{\pi }{6},0,-\frac{\pi }{6},-\frac{\pi }{3},0,\frac{\pi }{3}\right\}
$$ 
Moreover, the lengths $L_k(s)$ evaluated at the left and right generalized diagonals forming the boundary of the cylinder are
\begin{align*}
\{L_k(l)\}_{k=1}^6&=\left\{\frac{2}{\sqrt{3}},\sqrt{3},\frac{4}{\sqrt{3}},2 \sqrt{3},2 \sqrt{3},2 \sqrt{3}\right\},\\
\{L_k(r)\}_{k=1}^6&=\left\{0,\frac{\sqrt{3}}{2},\sqrt{3},\sqrt{3},\frac{3 \sqrt{3}}{2},2 \sqrt{3}\right\}.
\end{align*}
 Using these numbers we compute,
$$
\sum_{k=1}^{10}(-1)^k\theta_{k}L_k(l)=\frac{\pi }{3 \sqrt{3}}\quad\text{and}\quad  \sum_{k=1}^{6}(-1)^k\theta_{k}L_k(r)=\frac{\sqrt{3} \pi }{2}.
$$
Since $L=2\sqrt{3}$ and $\frac{\pi }{3 \sqrt{3}}<\frac{\pi}{\sqrt{3}}<\frac{\sqrt{3} \pi }{2}$, we see that the hypothesis of Theorem~\ref{suf prop} is satisfied. Thus $\mathcal{C}$ is $\lambda$-stable and by Lemma~\ref{Cminus}, $\mathcal{C}^{-1}$ is also $\lambda$-stable.

\end{enumerate}

\hfill$\Box$

\subsection{Proof of Proposition~\ref{454590}}
 Suppose the triangle unfolds to a square with vertical and horizontal sides.  By unfolding we can easily see that an orbit  is periodic if and only if its  slope
is rational and that the combinatorial period of a periodic orbit is always even.

We can always consider period orbits starting on the horizontal edge.
As it the case of the square (Proposition~\ref{prop:rectangle}) the only  possible solutions of the necessary condition for $\lambda$-stable periodic cylinders of Proposition \ref{necessary condition}  are the vertical, horizontal directions as well as $\pm \pi/4$. These angles correspond to the following periodic cylinders:

\begin{enumerate}
\item two periodic cylinders of period 4 which are perpendicular to the legs of the triangle,
\item two periodic cylinders of period 6 which are perpendicular to the diagonal of the triangle.
\end{enumerate}

\begin{figure}
    \centering
    \begin{subfigure}[b]{0.4\textwidth}
        \includegraphics[width=\textwidth]{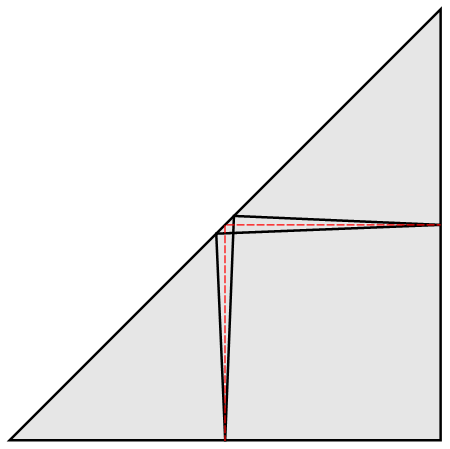}
        \caption{Period 4}
    \end{subfigure}
    $\qquad$
    \begin{subfigure}[b]{0.4\textwidth}
        \includegraphics[width=\textwidth]{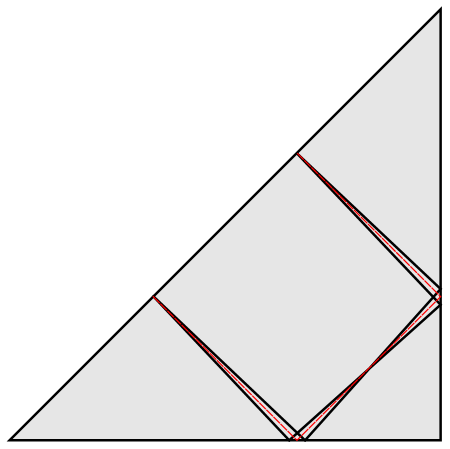}
        \caption{Period 6}
    \end{subfigure}
    \caption{The dashed line represents the $\lambda$-stable perpendicular periodic orbit and the solid line the associated periodic orbit for $\Phi_\lambda$ with $\lambda=0.9$. }\label{fig:454590}
\end{figure}

Doing the same computations as in Proposition~\ref{prop-equilateral} and Proposition~\ref{306090}, one directly checks that these periodic cylinders are $\lambda$-stable (See Figure~\ref{fig:454590}).

\hfill$\Box$

%

\section*{Acknowledgements}
JPG was partially supported by Funda\c c\~ao para a Ci\^encia e a Tecnologia (FCT/MEC) through the grant SFRH/BPD/78230/2011 and
 partially supported by the Project CEMAPRE - UID/MULTI/00491/2013 financed by FCT/MEC through national funds.
 ST gratefully acknowledges the support of R\'egion PACA Project APEX ``Syst\`emes dynamiques: Probabilit\'es et Approximation Diophantienne PAD''

The authors wish to express their gratitude to Julien Cassaigne for useful discussions about Sturmian sequences.


\bibliographystyle{amsplain}
\bibliography{Bibfile}

\end{document}